\newtheorem{theorem}{Theorem}[section]
\newtheorem{lemma}[theorem]{Lemma}
\theoremstyle{definition}
\theoremstyle{remark}
\newtheorem{remark}[theorem]{Remark}
\numberwithin{equation}{section}
\DeclareMathOperator{\interior}{int}
\title{A Generalized Second-Order Positivity-Preserving Numerical Method  for Non-Autonomous Dynamical Systems \\ with Applications}
\author{Manh Tuan Hoang\footnote{ \href{mailto:tuanhm16@fe.edu.vn}{tuanhm16@fe.edu.vn}} ,
Matthias Ehrhardt\footnote{ \href{mailto:ehrhardt@uni-wuppertal.de}{ehrhardt@uni-wuppertal.de}} 
}
\affil{Department of Mathematics, FPT University, Hoa Lac Hi-Tech Park, \\ Km29 Thang Long Blvd, Hanoi, Viet Nam}
\affil{IMACM, School of Mathematics and Natural Sciences, \\ University of Wuppertal, Germany}
\begin{document}
\maketitle

\begin{tikzpicture}[remember picture,overlay]
	\node[anchor=north east,inner sep=20pt] at (current page.north east)
	{\includegraphics[scale=0.2]{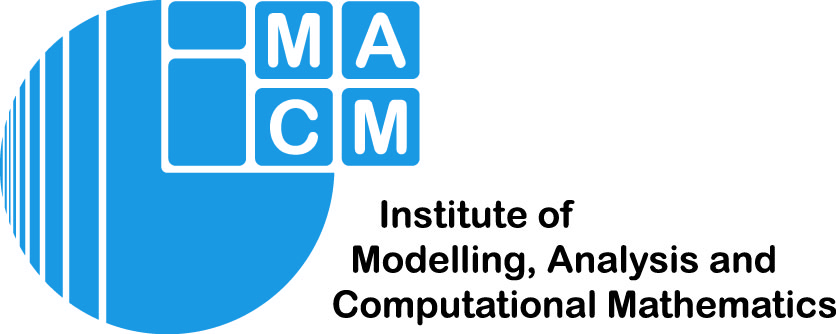}};
\end{tikzpicture}

\begin{abstract}
In this work, we propose a generalized, second-order, nonstandard finite difference (NSFD) method for non-autonomous dynamical systems. The proposed method combines the NSFD framework with a new non-local approximation of the right-hand side function. This method achieves second-order convergence and unconditionally preserves the positivity of solutions for all step sizes. Especially, it avoids the restrictive conditions required by many existing positivity-preserving, second-order NSFD methods. 
The method is easy to implement and computationally efficient. Numerical experiments, including an improved NSFD scheme for an SIR epidemic model, confirm the theoretical results. Additionally, we demonstrate the method's applicability to nonlinear partial differential equations and boundary value problems with positive solutions, showcasing its versatility in real-world modeling.
\end{abstract}

\begin{minipage}{0.9\linewidth}
 \footnotesize
\textbf{AMS classification:} 65L05, 65Z05.

\medskip

\noindent
\textbf{Keywords:} 
Nonstandard finite difference, Positivity-preserving, Non-local approximation, Epidemic Models, Non-autonomous dynamical systems.

\end{minipage}

\section{Introduction}\label{Sec1}
Various important processes and phenomena in real-world situations can be modeled mathematically  by non-autonomous dynamical systems of the form:
\begin{equation}\label{eq:1}
     y'(t) = F(t, y(t)), \quad y(0) = y_0 \in \mathbb{R}^n,
\end{equation}
where
$y(t) = [y_1(t),\,y_2(t),\ldots,y_n(t)]^\top\colon\mathbb{R} \to \mathbb{R}^n$ is an unknown function that must be determined as the solution;
$F(t, y) = [F_1(t, y),\,F_2(t, y),\ldots, F_n(t, y)]^\top\colon \mathbb{R}\times\mathbb{R}^n \to \mathbb{R}^n$ satisfies conditions that guarantee the existence and uniqueness of solutions to \eqref{eq:1} \cite{Allen, Hirsch, Mattheij, Smith}.

For these dynamical systems, the positivity of the solutions can be considered as the most common and important characteristic \cite{Allen, Mattheij, Smith}. 
This characteristic can be easily investigated by a simple necessary and sufficient condition (see \cite[Lemma 1]{Horvath1} and \cite[Proposition B.7]{Smith}): 
The solution of \eqref{eq:1} admits the set $\mathbb{R}^n_+ = \{(y_1,\,y_2,\ldots,y_n) \in \mathbb{R}^n|y_1 \geq 0,\,\,\,y_2 \geq 0,\ldots,y_n \geq 0\}$ as a positively invariant set if and only if
\begin{equation}\label{eq:2}
F_i(t, y)|_{y_i = 0} := F_i(t, y_1,\ldots,y_{i -1},\,0,\,y_{i + 1},\ldots y_n) \geq 0
\end{equation} 
for $i = 1, 2, \ldots, n$ and $(t, y)\in\mathbb{R}_+ \times \mathbb{R}^n_+$.

Numerical methods that preserve the positivity of solutions to \eqref{eq:1} are essential in both theory and practice (see, for instance, \cite{Horvath1, Horvath2, Mickens1, Mickens2, Mickens3, Mickens4, Mickens5}). 
Nonstandard finite difference (NSFD) schemes, which were first introduced by Mickens in the 1980s \cite{Mickens1, Mickens2, Mickens3, Mickens4, Mickens5}, have become an efficient approach to the positivity-preserving problem of numerical methods. 
Specifically, NSFD schemes have the ability to preserve not only the positivity of the solutions but also other qualitative dynamical properties for all step sizes \cite{Mickens1, Mickens2, Mickens3, Mickens4, Mickens5, Patidar1, Patidar2}. 
However, NSFD schemes typically achieve only first-order accuracy.
For this reason, high-order NSFD schemes for dynamical systems governed by nonlinear ordinary differential equations (ODEs), mainly second-order schemes, have been intensively studied in recent years (see \cite{Alalhareth1, Alalhareth2, Conte, Hoang1, Hoang2, Hoang3, HoangMatthias1, HoangMatthias2, Kojouharov1} and references therein). 
These NSFD schemes are derived from methodology of Mickens \cite{Mickens1, Mickens2, Mickens3, Mickens4, Mickens5} with a non-local approximation for the right-hand side functions and renormalization of the denominator functions.

Second-order NSFD methods have been developed for one-dimensional dynamical systems (see, e.g., \cite{Hoang1, Hoang2, Hoang3, Kojouharov1}). For multi-dimensional dynamical systems, Alalhareth et al.\ \cite{Alalhareth1} have been developed the approach used in \cite{Wood} to construct second-order modified positive and elementary stable (SOPESN) NSFD methods for $n$-dimensional autonomous differential equations. 
These SOPESN methods were subsequently employed in \cite{Alalhareth3} to numerically solve a mathematical model of nutrient recycling and dormancy in a chemostat. 
In a recent work \cite{HoangMatthias2}, the authors 
have used a nonlocal approximation with right-hand side function weights and nonstandard denominator functions to construct a second-order and dynamically consistent NSFD method for a general Rosenzweig-MacArthur predator-prey model. 
Recently, Conte et al.\ \cite{Conte} have derived a general procedure to obtain unconditionally positive second-order NSFD methods.
Furthermore, adding parameters to these schemes for each particular problem allows one to determine the optimal parameter values to guarantee positivity, elementary stability, and minimization of the local truncation error.

Inspired by the importance of positivity-preserving numerical methods, this work proposes a  straightforward method for constructing second-order positivity-preserving numerical methods for system \eqref{eq:1}. 
Throughout this paper, we will consider the system \eqref{eq:1} under the condition \eqref{eq:2}.

It is not difficult to show that for each $1 \leq i \leq n$:
\begin{itemize}
\item if $F_i(t, y)|_{y_i = 0} = 0$ and $y_i(0) = 0$, then $y_i(t) = 0$ for all $t \geq 0$ is the unique solution;
\item if $F_i(t, y)|_{y_i = 0} = 0$ and $y_i(0) > 0$, then $y_i(t) > 0$ for all $t > 0$;
\item if $F_i(t, y)|_{y_i = 0} > 0$ and $y_i(0) > 0$, then $y_i(t) > 0$ for all $t > 0$;
\item if $F_i(t, y)|_{y_i = 0} > 0$ and $y_i(0) = 0$, then there exists $t_0 > 0$ such that $y_i(t) > 0$ for all $t \geq t_0$.
\end{itemize}
Consequently, without loss of generality, we can consider \eqref{eq:1} with strictly positive solutions, that is for each $i$
\begin{equation}\label{eq:3}
y_i(0) > 0 \Longrightarrow y_i(t) > 0\quad \text{for} \quad t > 0.
\end{equation}
In other words, the system \eqref{eq:1} admits the interior $\interior(\mathbb{R}^n_+)$ of $\mathbb{R}^n_+$ as a positively invariant set. Therefore, our goal is to develop second-order numerical methods that possess the property
\begin{equation}\label{eq:4}
       y_i(0) > 0 \Longrightarrow y_i^k > 0 \quad \text{for all}     \quad k = 1, 2, \ldots \quad \text{and} \quad \Delta t > 0,
\end{equation}
where $\Delta t > 0$ is the step size and $y_i^k$ is the intended approximation of $y_i(t^k)$ with $t^k = k\Delta t$ for $k = 1, 2, \ldots$.

Based on a representation theorem \cite[Theorem 10]{Cresson2}, it is important to note that the system~\eqref{eq:1} can be represented in the form
\begin{equation}\label{eq:5}
    y_i'(t) = f_i(t, y) - y_i \,g_i(t, y), \quad i = 1, 2, \ldots, n
\end{equation}
where $f_i$ and $g_i$ are two functions from $\mathbb{R}_+ \times \interior\Big(\mathbb{R}^n_+\Big) \to \mathbb{R}_+$.

From now on, we will work with \eqref{eq:5} instead of \eqref{eq:1}.
Using the approaches used in \cite{Alalhareth1, Alalhareth2} and \cite{Conte}, one can obtain second-order positivity-preserving schemes for \eqref{eq:1}. 
However, as will be discussed in Section~\ref{Sec2}, the resulting NSFD schemes require a strict and indispensable condition (Condition~\eqref{eq:7}), which limits their applicability in computations. 
In contrast, the NSFD method proposed in this work relaxes this condition. As a result, its computational implementation is straightforward. 
It is well-known that Runge-Kutta methods only guarantee the positivity preserving property in many situations if the step size is smaller than a positivity step size threshold (see, e.g.,  \cite{Horvath1, Horvath2}). 
However, the constructed NSFD method is positivity-preserving regardless of the chosen step size. In other words, it is unconditionally positive.

The paper is organized as follows.
In Section \ref{Sec2}, we apply the well-known approaches proposed in \cite{Alalhareth1, Alalhareth2} and \cite{Conte} to obtain second-order positivity-preserving NSFD schemes for \eqref{eq:5}, thereby identifying a strict and  indispensable condition (Condition \eqref{eq:7}) imposed on the resulting NSFD schemes.
In Section \ref{Sec3}, we construct and analyze a generalized second-order positivity-preserving NSFD method for which the condition \eqref{eq:7} is relaxed.
In Section \ref{Sec4}, we conduct a set of numerical experiments to support and illustrate the theoretical results. In these experiments, we consider a modified Susceptible-Infected-Removed (SIR) model \cite{Bailey} as a test problem. An important consequence is that the dynamically consistent NSFD scheme for the SIR model, constructed  very recently in \cite{Lemos-Silva}, is improved.
Finally, in Sections \ref{Sec5} and \ref{Sec6}, we apply the constructed NSFD method to solve some classes of partial differential equations (PDEs) and boundary value problems (BVPs) with positive solutions.
The last section includes some concluding remarks and discussions.

\section{NSFD Schemes Based on Well-Known Approaches}\label{Sec2}
In this section, we apply the NSFD methods constructed in \cite{Alalhareth1, Alalhareth2} and \cite{Conte} to obtain second-order NSFD positivity-preserving NSFD schemes for \eqref{eq:5}.

First, applying the approach in \cite{Alalhareth2} leads to the following NSFD scheme for \eqref{eq:5}:
\begin{equation}\label{eq:NSFD1}
\dfrac{y_i^{k + 1} - y_i^k}{\phi_i(\Delta t, t^k, y^k)} = f_i(t^k, y_i^k) - y_i^{k + 1}g_i(t^k, y_i^k),
\end{equation}
where $\phi_i(.)$ is a function satisfying
\begin{equation}\label{eq:6}
\begin{split}
&\phi_i(\Delta_t, t, y) > 0 \quad \text{for all} \quad \Delta t > 0, \quad (t, y) \in \mathbb{R}_+ \times \interior(\mathbb{R}^n_+),\\
&\phi_i(\Delta_t, t, y) = \Delta t + \mathcal{O}(\Delta t^2) \quad \text{as} \quad \Delta t \to 0.
\end{split}
\end{equation}
The system \eqref{eq:NSFD1} can be written in the form
\begin{equation}\label{eq:NSFD1aR}
y_i^{k + 1} = \dfrac{y_i^k + \phi_i\,f_i(t^k, y^k)}{1 + \phi_i\,g_i(t^k, y^k)}.
\end{equation}
This implies that \eqref{eq:NSFD1} preserves the positivity of the solutions for all $\Delta t > 0$. 
Note that first-order NSFD schemes  for a general class of two ODEs constructed \cite{Cresson1} and for a $n$-dimensional productive-destructive systems \cite{Wood1} can be derived from \eqref{eq:NSFD1aR} with $\phi_i = \phi$ for all $i = 1, 2, \ldots n$.

A condition ensuring the second-order accuracy of \eqref{eq:NSFD1} is determined via Taylor's expansion theorem as follows (see \cite{Alalhareth2}).
\begin{lemma}\label{Lemma1}
Assume that the denominator functions $\phi_i$ ($i = 1, 2, \ldots, n$) satisfy \eqref{eq:6}. 
Then, the truncated error of \eqref{eq:NSFD1} is $\mathcal{O}(\Delta t^3)$ as $\Delta t \to 0$ whenever
\begin{equation}\label{eq:NSFD1a}
\dfrac{\partial^2 \phi_i}{\partial \Delta t^2}(0,t,y) = 2g_i(t, y) + \dfrac{1}{F_i(t, y)}\bigg(\dfrac{\partial F_i}{\partial t}(t, y) + \sum_{j = 1}^n\dfrac{\partial F_i}{\partial y_j}(t, y)\,F_j(t, y)\bigg)
\end{equation}
for all $t \geq 0$, $y \in \interior(\mathbb{R}^n_+)$ such that $F_i(t, y) \ne 0$, where $F_i(t, y) = f_i(t, y) + y_i\,g(t, y)$ is the right-hand side function of the $i$-th equation of \eqref{eq:1}.
\end{lemma}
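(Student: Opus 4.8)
The plan is to estimate the one-step local truncation error by substituting the exact solution into the explicit form \eqref{eq:NSFD1aR} of the scheme and expanding in powers of $\Delta t$. Write $h = \Delta t$, fix $(t,y)$ on an exact trajectory so that $y = y(t)$, and let $\widehat{y}_i^{\,k+1} = \bigl(y_i(t) + \phi_i f_i\bigr)/\bigl(1 + \phi_i g_i\bigr)$ denote the value the scheme returns from exact data, with $\phi_i = \phi_i(h,t,y)$ and $f_i, g_i$ evaluated at $(t,y)$. Second-order accuracy is equivalent to the local error $L_i := y_i(t+h) - \widehat{y}_i^{\,k+1}$ being $\mathcal{O}(h^3)$. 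It is cleaner to work with the cleared-denominator defect
\[
D_i := \bigl(1 + \phi_i g_i\bigr)\,L_i = \bigl(y_i(t+h) - y_i(t)\bigr) + \phi_i\, g_i\, y_i(t+h) - \phi_i\, f_i,
\]
because $1 + \phi_i g_i = 1 + \mathcal{O}(h)$ makes $D_i = \mathcal{O}(h^3)$ equivalent to $L_i = \mathcal{O}(h^3)$.

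I then need two Taylor expansions. Differentiating $y_i' = F_i(t,y)$ along the trajectory gives $y_i'' = \partial_t F_i + \sum_{j=1}^n (\partial_{y_j} F_i)\,F_j$, hence
\[
y_i(t+h) - y_i(t) = h\,F_i + \frac{h^2}{2}\Bigl(\partial_t F_i + \sum_{j=1}^n (\partial_{y_j} F_i)\,F_j\Bigr) + \mathcal{O}(h^3).
\]
For the denominator, the normalization \eqref{eq:6} forces $\phi_i(0,t,y) = 0$ and $\partial_{\Delta t}\phi_i(0,t,y) = 1$, so that
\[
\phi_i(h,t,y) = h + \frac{h^2}{2}\,\frac{\partial^2 \phi_i}{\partial \Delta t^2}(0,t,y) + \mathcal{O}(h^3).
\]

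The decisive step is to eliminate $f_i$ through the defining relation $f_i = F_i + y_i g_i$ of \eqref{eq:5}, which reorganizes everything around $F_i$. Substituting the two expansions into $D_i$ and collecting powers of $h$, the $\mathcal{O}(h)$ coefficient equals $F_i + y_i g_i - f_i = 0$, so consistency is automatic. At order $h^2$, using $y_i g_i - f_i = -F_i$ in the contributions coming from the implicit term and from $-\phi_i f_i$, the coefficient reduces to
\[
\frac{1}{2}\,y_i'' + g_i F_i - \frac{1}{2}\,\frac{\partial^2 \phi_i}{\partial \Delta t^2}(0,t,y)\,F_i.
\]
Setting this to zero and solving for $\frac{\partial^2 \phi_i}{\partial \Delta t^2}(0,t,y)$, which is legitimate exactly where $F_i \ne 0$, reproduces \eqref{eq:NSFD1a}: dividing $y_i''$ by $F_i$ gives the bracketed term, and doubling $g_i F_i$ and dividing by $F_i$ yields the summand $2g_i$.

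The part demanding the most care is the implicit term $\phi_i g_i y_i^{k+1}$. Because $y_i(t+h) = y_i + h F_i + \mathcal{O}(h^2)$ and $\phi_i = h + \mathcal{O}(h^2)$, it contributes two separate second-order pieces: $h^2 g_i F_i$, from the linear parts of $\phi_i$ and of $y_i(t+h)$, and $\tfrac{h^2}{2}\,\frac{\partial^2 \phi_i}{\partial \Delta t^2}(0,t,y)\,y_i g_i$, from the quadratic part of $\phi_i$. The first piece is what becomes the $2g_i$ summand in \eqref{eq:NSFD1a} after the factor $\tfrac12$ is cleared, while the second piece combines with the $\mathcal{O}(h^2)$ part of $-\phi_i f_i$ so that the coefficient of $\frac{\partial^2 \phi_i}{\partial \Delta t^2}$ collapses to $F_i$ via $y_i g_i - f_i = -F_i$. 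Neglecting either contribution would destroy this balance; everything else is routine bookkeeping of the Taylor coefficients.
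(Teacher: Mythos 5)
Your proof is correct and follows essentially the same route as the paper (which defers the details to its reference \cite{Alalhareth2} and uses the identical Taylor-expansion technique in the proof of Theorem~\ref{Theorem2}): expand the explicit one-step map and the exact solution in powers of $\Delta t$, use $\phi_i(0)=0$, $\partial_{\Delta t}\phi_i(0)=1$ from \eqref{eq:6}, and match the $\mathcal{O}(\Delta t^2)$ coefficients, your only cosmetic variation being the cleared-denominator defect $D_i=(1+\phi_i g_i)L_i$ in place of differentiating the quotient $V_i$ directly. Note also that you correctly worked with $F_i = f_i - y_i g_i$ as dictated by \eqref{eq:5} (the sign in the lemma's statement ``$F_i = f_i + y_i\,g$'' is a typo), which is exactly what makes the $\mathcal{O}(\Delta t)$ coefficient vanish and the $2g_i$ term emerge.
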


Next, we apply the approach used in \cite{Alalhareth1} (as well as in \cite{Alalhareth2}) to obtain a second-order and positive NSFD scheme for \eqref{eq:1}. The resulting NSFD scheme is given by
\begin{equation}\label{eq:NSFD2}
\dfrac{y_i^{k + 1} - y_i^k}{\phi_i(\Delta t, t^k, y^k)} = w_i^kF_i(t^k, y^k),
\end{equation} 
where
\begin{equation*}
w_i^k :=
\begin{cases}
&1, \quad \text{if} \quad F_i(t^k, y^k) \geq 0,\\
&\dfrac{y_i^{k + 1}}{y_i^k}, \quad  \text{if} \quad F_i(t^k, y^k) < 0,
\end{cases}
\end{equation*}
and $\phi_i(.)$ are functions satisfying \eqref{eq:6}.

The following result is proven based on the proof of  \cite[Theorem 3.2.1]{Alalhareth2} (see also \cite{Alalhareth1}).
\begin{lemma}\label{Lemma2}
Assume that the denominator functions $\phi_i$ ($i = 1, 2, \ldots, n$) satisfy \eqref{eq:6}. Then, the truncated error of \eqref{eq:NSFD1} is $\mathcal{O}(\Delta t^3)$ as $\Delta t \to 0$ whenever
\begin{equation}\label{eq:NSFD2a}
\dfrac{\partial^2 \phi_i}{\partial \Delta t^2}(0,t,y) =\\
\begin{cases}
&\dfrac{1}{F_i(t, y)}\bigg(\dfrac{\partial F_i}{\partial t}(t, y) + \mathlarger{\mathlarger{\sum_{j = 1}}}^n\dfrac{\partial F_i}{\partial y_j}(t, y)F_j(t, y)\bigg) \quad \text{if} \quad F_i(t, y) \geq 0,\\[.2cm]
&2\dfrac{F_i(t, y)}{y_i^2} - \dfrac{1}{F_i(t, y)y_i^2}\mathlarger{\mathlarger{\sum}}_{j = 1}^n\bigg(\dfrac{\partial F_i}{\partial t}(t, y) + \mathlarger{\mathlarger{\sum}}_{j = 1}^n\dfrac{\partial F_i}{\partial y_j}(t, y)F_j(t, y)\bigg)\,\,\, \text{if} \quad F_i(t, y) < 0
\end{cases}
\end{equation}
for all $t \geq 0$, $y \in \interior(\mathbb{R}^n_+)$ such that $F_i(t, y) \ne 0$, where $F_i(t, y)$ is the right-hand side function of the $i$-th equation of \eqref{eq:1}.
\end{lemma}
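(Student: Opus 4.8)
The plan is to mirror the local-truncation-error computation behind Lemma~\ref{Lemma1}, carried out componentwise and split according to the sign of $F_i(t^k,y^k)$ that defines the weight $w_i^k$. For each $i$, I would write the scheme \eqref{eq:NSFD2} as an explicit one-step map $y_i^{k+1}=\Psi_i(t^k,y^k,\Delta t)$ and measure the local truncation error $\tau_i^k=y_i(t^{k+1})-\Psi_i\big(t^k,y(t^k),\Delta t\big)$ against the exact solution, aiming for $\tau_i^k=\mathcal{O}(\Delta t^3)$. The exact side contributes only the second-order Taylor expansion
\begin{equation*}
y_i(t^{k+1})=y_i(t^k)+\Delta t\,F_i+\tfrac12\Delta t^2\Big(\tfrac{\partial F_i}{\partial t}+\sum_{j=1}^n\tfrac{\partial F_i}{\partial y_j}F_j\Big)+\mathcal{O}(\Delta t^3),
\end{equation*}
where I have used $y_i''=\partial_t F_i+\sum_j\partial_{y_j}F_i\,F_j$ and all quantities are evaluated at $(t^k,y(t^k))$. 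On the scheme side, the normalization \eqref{eq:6} forces $\phi_i(0,\cdot)=0$ and $\partial_{\Delta t}\phi_i(0,\cdot)=1$, so the only free second-order datum is $\frac{\partial^2\phi_i}{\partial\Delta t^2}(0,t,y)$, entering through $\phi_i=\Delta t+\tfrac12\frac{\partial^2\phi_i}{\partial\Delta t^2}(0,\cdot)\,\Delta t^2+\mathcal{O}(\Delta t^3)$.

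First I would dispatch the branch $F_i(t^k,y^k)\ge0$, where $w_i^k=1$ and \eqref{eq:NSFD2} collapses to the explicit step $y_i^{k+1}=y_i^k+\phi_i F_i$. Inserting the expansion of $\phi_i$ and equating the $\Delta t^2$ coefficients with the exact expansion gives $\tfrac12\frac{\partial^2\phi_i}{\partial\Delta t^2}(0,\cdot)\,F_i=\tfrac12\big(\partial_t F_i+\sum_j\partial_{y_j}F_i\,F_j\big)$; dividing by $F_i\neq0$ yields precisely the first branch of \eqref{eq:NSFD2a}. This step is essentially identical to the derivation of \eqref{eq:NSFD1a}.

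The substantive branch is $F_i(t^k,y^k)<0$, where $w_i^k=y_i^{k+1}/y_i^k$ renders the update implicit. Solving the resulting linear equation for $y_i^{k+1}$ yields the closed form $y_i^{k+1}=(y_i^k)^2/\big(y_i^k-\phi_i F_i\big)$, which is well defined and positive since $-\phi_i F_i>0$. I would expand this rational map as a geometric series in $\phi_i F_i/y_i^k$; the decisive feature is that the quadratic term $(\phi_i F_i/y_i^k)^2$ of the series survives at order $\Delta t^2$ and contributes an extra $F_i^2/y_i^k$ beyond the $\tfrac12\frac{\partial^2\phi_i}{\partial\Delta t^2}(0,\cdot)\,F_i$ term. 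Matching the $\Delta t^2$ coefficients then gives the relation $\tfrac12\frac{\partial^2\phi_i}{\partial\Delta t^2}(0,\cdot)\,F_i+\frac{F_i^2}{y_i}=\tfrac12\big(\partial_t F_i+\sum_j\partial_{y_j}F_i\,F_j\big)$, and solving for $\frac{\partial^2\phi_i}{\partial\Delta t^2}(0,\cdot)$ produces the second branch of \eqref{eq:NSFD2a}.

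I expect the main obstacle to be the bookkeeping in this second branch: one must retain exactly the quadratic term of the geometric expansion --- it is the origin of the $F_i/y_i$ correction that separates \eqref{eq:NSFD2a} from the $2g_i$ correction of \eqref{eq:NSFD1a} --- while safely discarding all $\mathcal{O}(\Delta t^3)$ contributions, and one must verify that $\Psi_i$ is smooth in $\Delta t$ near $0$ so that the Taylor matching is legitimate (which follows from $y_i^k-\phi_i F_i>0$ for small $\Delta t$). First-order consistency, needed before the second-order term can be matched, is automatic from the normalization in \eqref{eq:6}; no regularity beyond the differentiability of $F_i$ and $\phi_i$ already assumed in Lemma~\ref{Lemma1} is required.
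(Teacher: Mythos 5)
Your computation is correct and follows essentially the same route as the source on which the paper relies for this lemma (the paper gives no in-line proof, only the citation to Theorem 3.2.1 of \cite{Alalhareth2}): write each branch of the scheme \eqref{eq:NSFD2} as an explicit one-step map, expand in $\Delta t$ using $\phi_i=\Delta t+\tfrac12\,\partial^2_{\Delta t}\phi_i(0,\cdot)\,\Delta t^2+\mathcal{O}(\Delta t^3)$, and match coefficients against the exact Taylor expansion. The closed form $y_i^{k+1}=(y_i^k)^2/\bigl(y_i^k-\phi_i F_i\bigr)$ in the branch $F_i<0$ and the surviving quadratic term $F_i^2/y_i^k$ of its expansion are exactly the crux of that argument, and you identify them correctly.

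One point must be flagged, however. The relation you actually derive for $F_i<0$, namely
\begin{equation*}
\dfrac{\partial^2\phi_i}{\partial\Delta t^2}(0,t,y)
=\dfrac{1}{F_i(t,y)}\bigg(\dfrac{\partial F_i}{\partial t}(t,y)+\sum_{j=1}^n\dfrac{\partial F_i}{\partial y_j}(t,y)\,F_j(t,y)\bigg)-\dfrac{2F_i(t,y)}{y_i},
\end{equation*}
is \emph{not} the second branch of \eqref{eq:NSFD2a} as printed: the displayed formula carries factors $1/y_i^2$, a spurious outer $\sum_{j=1}^n$ applied to a $j$-independent quantity, and the two terms with the opposite arrangement of signs. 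Your version is the correct one --- it is the only dimensionally homogeneous candidate (both terms scale like an inverse time, matching the structure $2g_i+v_i/F_i$ of \eqref{eq:NSFD1a}), it reduces in one dimension to the known condition $\phi''(0)=f'(y)-2f(y)/y$ of \cite{Kojouharov1}, and it agrees with \cite{Alalhareth2}. So your derivation in effect corrects a typographical error in the lemma's statement; but your closing claim that the matching relation ``produces the second branch of \eqref{eq:NSFD2a}'' is false for the formula as displayed, and you should either have recorded the mismatch or stated the corrected condition explicitly. Everything else --- first-order consistency forced by \eqref{eq:6}, positivity of $y_i^k-\phi_i F_i$ guaranteeing smoothness of the one-step map near $\Delta t=0$, and the restriction to $F_i\ne0$ before dividing --- is handled correctly. (Minor remark: like the lemma itself, you rightly analyze \eqref{eq:NSFD2}, not \eqref{eq:NSFD1} as the statement's wording mistakenly says.)
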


We now construct another positivity-preserving and second-order NSFD scheme for \eqref{eq:1}, based on the $\alpha$-NSFD method recently formulated in \cite{Conte}. 
The resulting NSFD scheme is given by
\begin{equation}\label{eq:NSFD3}
\dfrac{y_i^{k+1} - y_i^k}{\phi_i(\Delta t, t^k, y^k)} = F_i(t^k, y^k) - \alpha^i \dfrac{y_i^{k + 1} - y_i^k}{y_i^k}F_{i-}(t^k, y^k),
\end{equation}
where the right-side functions $F_i = F_{i+} + F_{i-}$ are split into a positive $F_+$ term and a negative $F_{i-}$ term;  $\alpha^i$ are non-negative real numbers for $i = 1, 2, \ldots, n$.

Applying \eqref{eq:NSFD3} to \eqref{eq:5} yields
\begin{equation}\label{eq:NSFD3aR}
\dfrac{y_i^{k+1} - y_i^k}{\phi_i(\Delta t, t^k, y^k)} = F_i(t^k, y^k) - \alpha^i \dfrac{y_i^{k + 1} - y_i^k}{y_i^k}y_i^kg_i(t^k, y^k).
\end{equation}
Note that \eqref{eq:NSFD3aR} reduces to \eqref{eq:NSFD1} if $\alpha^i = 1$. Furthermore, we deduce from \cite[Theorem 3]{Conte} that \eqref{eq:NSFD3a} preserves the positivity of the solutions of \eqref{eq:5} if $\alpha^i \geq F_i(t^k, y^k)/(y_i^kg_i(t^k, y^k))$ for all $t_k \geq 0$ and $y^k \in \interior(\mathbb{R}^n_+)$. 
A condition for \eqref{eq:1} to be second-order accurate was given in \cite[Theorem 4]{Conte}. Based on this, we obtain the following lemma.
\begin{lemma}\label{Lemma3}
Assume that the denominator functions $\phi_i$ ($i = 1, 2, \ldots, n$) satisfy \eqref{eq:6}. Then, the truncated error of \eqref{eq:NSFD1} is $\mathcal{O}(\Delta t^3)$ as $\Delta t \to 0$ provided that
\begin{equation}\label{eq:NSFD3a}
\dfrac{\partial \phi_i}{\partial \Delta t}(0,t,y) = 2\alpha^i \,g_i(t, y) + \dfrac{1}{F_i(t, y)}\bigg(\dfrac{\partial F_i}{\partial t}(t, y) + \sum_{j = 1}^n\dfrac{\partial F_i}{\partial y_j}(t, y)\,F_j(t, y)\bigg)
\end{equation}
for all $t \geq 0$, $y \in \interior(\mathbb{R}^n_+)$ such that $F_i(t, y) \ne 0$, where $F_i(t, y)$ is the right-hand side function of the $i$-th equation of \eqref{eq:1}
\end{lemma}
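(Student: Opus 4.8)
The plan is to compute the local truncation error of \eqref{eq:NSFD3aR} directly by Taylor expansion, comparing a single step of the scheme, started from exact data, with the exact solution. First I would make the scheme explicit: since the correction term in \eqref{eq:NSFD3aR} satisfies $\alpha^i \tfrac{y_i^{k+1}-y_i^k}{y_i^k}\, y_i^k g_i = \alpha^i (y_i^{k+1}-y_i^k)\, g_i$, it is affine in $y_i^{k+1}$, so collecting terms yields the one-step map
\begin{equation*}
y_i^{k+1} = y_i^k + \frac{\phi_i\, F_i(t^k, y^k)}{1 + \alpha^i \phi_i\, g_i(t^k, y^k)}, \qquad \phi_i := \phi_i(\Delta t, t^k, y^k).
\end{equation*}
This is the form I would expand.

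Next I would set $y^k = y(t^k)$ and expand both the exact value $y_i(t^{k+1})$ and the numerical value $y_i^{k+1}$ in powers of $\Delta t$. On the exact side the chain rule gives $y_i''(t^k) = \partial_t F_i + \sum_{j} \partial_{y_j} F_i\, F_j =: DF_i$, so that $y_i(t^{k+1}) = y_i^k + \Delta t\, F_i + \tfrac12 \Delta t^2\, DF_i + \mathcal{O}(\Delta t^3)$. On the numerical side I would use \eqref{eq:6} to write $\phi_i = \Delta t + \tfrac12 \tfrac{\partial^2 \phi_i}{\partial \Delta t^2}(0,t,y)\,\Delta t^2 + \mathcal{O}(\Delta t^3)$ and expand the denominator by the geometric series $(1+\alpha^i \phi_i g_i)^{-1} = 1 - \alpha^i g_i\, \Delta t + \mathcal{O}(\Delta t^2)$. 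Multiplying out and retaining terms through $\Delta t^2$ gives
\begin{equation*}
y_i^{k+1} = y_i^k + \Delta t\, F_i + \Delta t^2\, F_i\Bigl(\tfrac12 \tfrac{\partial^2 \phi_i}{\partial \Delta t^2}(0,t,y) - \alpha^i g_i\Bigr) + \mathcal{O}(\Delta t^3).
\end{equation*}

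Finally, matching the $\Delta t^2$ coefficients, the $\Delta t^0$ and $\Delta t^1$ terms already agree, forces $F_i\bigl(\tfrac12 \tfrac{\partial^2\phi_i}{\partial\Delta t^2}(0,t,y) - \alpha^i g_i\bigr) = \tfrac12 DF_i$ at every point with $F_i \neq 0$, which rearranges to exactly the stated condition \eqref{eq:NSFD3a} on the second Taylor coefficient of $\phi_i$. A reassuring consistency check is that setting $\alpha^i = 1$ recovers scheme \eqref{eq:NSFD1} and its condition \eqref{eq:NSFD1a}, with $2\alpha^i g_i = 2 g_i$. I expect no deep obstacle; the one point demanding care is the bookkeeping of the rational update, specifically that the factor $\alpha^i g_i$ enters only through the first-order term of the geometric expansion of the denominator, which is precisely what turns the $2 g_i$ of Lemma~\ref{Lemma1} into $2\alpha^i g_i$. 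I would also note that \eqref{eq:6} already fixes $\tfrac{\partial \phi_i}{\partial \Delta t}(0,t,y)=1$, so the matching genuinely constrains the second derivative $\tfrac{\partial^2 \phi_i}{\partial \Delta t^2}(0,t,y)$ rather than the first.
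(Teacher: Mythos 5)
Your proposal is correct, and in fact more self-contained than what the paper does: the paper gives no direct proof of Lemma~\ref{Lemma3}, but obtains it by specializing \cite[Theorem 4]{Conte} to the scheme \eqref{eq:NSFD3aR}. Your route---writing the update explicitly as $y_i^{k+1} = y_i^k + \phi_i F_i/(1+\alpha^i \phi_i g_i)$, expanding $\phi_i = \Delta t + \tfrac12 \frac{\partial^2\phi_i}{\partial \Delta t^2}(0,t,y)\Delta t^2 + \mathcal{O}(\Delta t^3)$ via \eqref{eq:6}, expanding the denominator geometrically, and matching the $\Delta t^2$ coefficient against $y_i''(t^k)=v_i(t^k,y(t^k))$ from \eqref{eq:9}---is precisely the technique the paper itself deploys later in the proof of Theorem~\ref{Theorem2}, where the one-step map $V_i(\Delta t, t^k, y_i^k, y^k)$ plays the role of your explicit update and the second $\Delta t$-derivative of $V_i$ carries the same information as your coefficient $F_i\bigl(\tfrac12 \frac{\partial^2\phi_i}{\partial \Delta t^2}(0,t,y) - \alpha^i g_i\bigr)$. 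So your argument is a direct, elementary proof of what the paper imports by citation, and your consistency check that $\alpha^i = 1$ collapses both the scheme and the condition to Lemma~\ref{Lemma1} is the right sanity check: it is exactly how the paper relates \eqref{eq:NSFD3aR} to \eqref{eq:NSFD1}.

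One point where your derivation actually improves on the printed statement: your expansion shows that the matching condition constrains $\frac{\partial^2\phi_i}{\partial\Delta t^2}(0,t,y)$, whereas \eqref{eq:NSFD3a} as printed has the first derivative $\frac{\partial\phi_i}{\partial\Delta t}(0,t,y)$ on the left-hand side. Since \eqref{eq:6} already forces $\frac{\partial\phi_i}{\partial\Delta t}(0,t,y)=1$, the statement as printed is inconsistent and must be a typo for the second derivative---compare \eqref{eq:NSFD1a} and \eqref{eq:NSFD2a}, where the second derivative appears. Your closing remark identifies exactly this and repairs it; with that correction your matching condition $\frac{\partial^2\phi_i}{\partial\Delta t^2}(0,t,y) = 2\alpha^i g_i(t,y) + v_i(t,y)/F_i(t,y)$ agrees with the intended \eqref{eq:NSFD3a}, including the role of $F_i(t,y)\neq 0$ in the division.
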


\begin{remark}
Consistency is a local property of one-step schemes, such as the NSFD schemes \eqref{eq:NSFD1}, \eqref{eq:NSFD2}, and \eqref{eq:NSFD3}. Using the well-known result that the convergence order can follow from the consistency order \cite{Ascher},  we obtain the NSFD schemes convergence of order $2$ from the second-order consistent ones.
\end{remark}
\begin{remark}
Lemmas \ref{Lemma1}--\ref{Lemma3} provide the conditions for the NSFD schemes to be convergent of order $2$.
However, it is easy to see a strict and indispensable condition for the NSFD schemes  \eqref{eq:NSFD1}, \eqref{eq:NSFD2} and \eqref{eq:NSFD3} is
\begin{equation}\label{eq:7}
F_i(t^k, y^k) \ne 0 \quad \text{for all} \quad k \geq 0 \quad \text{and} \quad i = 1, 2, \ldots, n. 
\end{equation}
This condition can be removed for 1-D dynamical systems \cite{Hoang1, Hoang2, Hoang3, Kojouharov1}; however, it limits the applicability of the corresponding NSFD schemes for computing solutions to multi-dimensional dynamical systems. 
To illustrate this, we consider the following simple system
\begin{equation*}\label{eq:8}
\begin{split}
&y_1' = F_1(t, y_1, y_2) := t^2(y_1 - a)^2 + t^4(y_2 - b)^4,\\
&y_2' = F_2(t, y_1, y_2) := t^4(y_1 - c)^2 + t^2(y_2 - d)^2,
\end{split}
\end{equation*}
subject to the initial data: $y_1(0) > 0$ and $y_2(0) > 0$, where $a$, $b$, $c$, $d$ are positive real numbers. 
If at a certain iteration $k$ ($k \geq 0$) we obtain $y_1^k = a, y_2^k = b$, then $F_1(t^k, y_1^k, y_2^k) = 0$; consequently, the condition \eqref{eq:7} is violated. 
The same can be said if $y_1^k = c, y_2^k = d$ for some $k \geq 0$.
For autonomous dynamical systems, \eqref{eq:NSFD1a}, \eqref{eq:NSFD2a} and \eqref{eq:NSFD3a} are not satisfied if there exists an approximation $y^k$ belonging to the nullclines of the dynamical systems under consideration.

The NSFD scheme \eqref{eq:NSFD2} requires determining the sign of the right-hand side function at each iteration step to choose $w_i^k$. 
Similarly, the NSFD scheme \eqref{eq:NSFD3a} requires choosing the value of $\alpha^i$ at each iteration step.
\end{remark}

\section{Construction of New Second-Order Positivity-Preserving NSFD Method}\label{Sec3}
In this section, we will construct a generalized, second-order, positivity-preserving NSFD method that relaxes the condition \eqref{eq:7}.

For any function $u$ from $\interior(\mathbb{R}^m_+)$ to $\mathbb{R}_+$, we define
\begin{equation*}
\mathcal{D}(u) = \{(u_+, u_-)|u_+, u_-: \interior(\mathbb{R}^m_+) \to \mathbb{R}; u_+, u_- \geq 0, u_+ - u_- = u\}.
\end{equation*}
It is easy to see that $\mathcal{D}(u)$ is not empty. Indeed, the following are elements of $\mathcal{D}(u)$:
\begin{equation*}
   \bigg(u_+ = \dfrac{u + |u|}{2},\,\,\,u_- = -\dfrac{u - |u|}{2}\bigg), \quad (u_+ = u^2 + 1 + u,\,\,\,u_- = u^2 + 1), \quad (e^u + u, \,\,\,e^u).
\end{equation*}

We propose the following NSFD model for \eqref{eq:5}
\begin{equation}\label{eq:NSFD}
\dfrac{y_i^{k + 1} - y_i^k}{\phi_i(\Delta t, t^k, y^k)} = f_i(t^k, y^k) - y_i^{k + 1}g_i(t^k, y^k) + \varphi_i(\Delta t, t^k, y^k)\bigg(A_i(t^k, y^k) - \dfrac{y_i^{k+1}}{y_i^k}B_i(t^k, y^k)\bigg),
\end{equation}
where
\begin{itemize}
\item $\phi_i(.)$ ($i =1,2, \ldots, n$) are denominator functions satisfying \eqref{eq:6};
\item $A_i(.)$ and $B_i(.)$ ($i =1,2, \ldots, n$) are functions from $\mathbb{R}_+ \times \interior(\mathbb{R}^{n}_+)$ to $\mathbb{R}_+$, which will be determined so that the NSFD scheme is convergent of order $2$;
\item $\varphi_i(\Delta t)$ ($i =1,2, \ldots, n$) are functions of $\Delta t$ that satisfy
\begin{equation}\label{Parameter1}
\begin{split}
&\varphi_i(\Delta t) > 0 \quad \text{for all} \quad \Delta t > 0,\\
&\varphi_i'(0) := \kappa_i > 0. 
\end{split}
\end{equation}
\end{itemize}

First, we investigate the positivity of solutions to the system~\eqref{eq:NSFD}.
\begin{theorem}\label{Theorem1}
If $A_i(t, y)$ and $B_i(t, y)$ ($i = 1, 2, \ldots, n$) satisfy $A_i(t, y) \geq 0$ and $B_i(t, y) \geq 0$ for all $(t, y) \in \mathbb{R}_+ \times \interior(\mathbb{R}^{n}_+)$, then the model \eqref{eq:NSFD} admits the set $\interior(\mathbb{R}^n_+)$ as a positively invariant set for all $\Delta t > 0$. 
In other words, the NSFD method \eqref{eq:NSFD} preserves the positivity of the solution to the dynamical system \eqref{eq:1} for all finite values of the step size. 
\end{theorem}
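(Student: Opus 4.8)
The plan is to rewrite the implicit NSFD scheme \eqref{eq:NSFD} as an explicit update formula $y_i^{k+1} = (\text{something})$ by collecting all the terms involving $y_i^{k+1}$ on the left-hand side and everything else on the right. Once this is done, I expect to obtain a fraction whose numerator and denominator are both manifestly positive whenever $y^k \in \interior(\mathbb{R}^n_+)$, $A_i, B_i \geq 0$, and the structural positivity assumptions \eqref{eq:6} and \eqref{Parameter1} hold. This is the same strategy that turned \eqref{eq:NSFD1} into \eqref{eq:NSFD1aR}, so the main content of the proof is algebraic rearrangement followed by a sign inspection.

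Concretely, first I would multiply \eqref{eq:NSFD} through by $\phi_i := \phi_i(\Delta t, t^k, y^k) > 0$ to clear the denominator on the left. The terms containing $y_i^{k+1}$ are $y_i^{k+1}$ itself (from the difference quotient), $-\phi_i\, y_i^{k+1} g_i(t^k, y^k)$, and $-\phi_i\,\varphi_i(\Delta t)\,\dfrac{y_i^{k+1}}{y_i^k}B_i(t^k, y^k)$. Collecting these, I would factor out $y_i^{k+1}$ to get a coefficient
\begin{equation*}
1 + \phi_i\, g_i(t^k, y^k) + \phi_i\,\varphi_i(\Delta t)\,\dfrac{B_i(t^k, y^k)}{y_i^k}.
\end{equation*}
The remaining terms, which do not involve $y_i^{k+1}$, give the numerator
\begin{equation*}
y_i^k + \phi_i\, f_i(t^k, y^k) + \phi_i\,\varphi_i(\Delta t)\,A_i(t^k, y^k).
\end{equation*}
Dividing yields the explicit form
\begin{equation*}
y_i^{k+1} = \dfrac{y_i^k + \phi_i\, f_i(t^k, y^k) + \phi_i\,\varphi_i(\Delta t)\,A_i(t^k, y^k)}{1 + \phi_i\, g_i(t^k, y^k) + \phi_i\,\varphi_i(\Delta t)\,B_i(t^k, y^k)/y_i^k}.
\end{equation*}

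Next I would verify positivity by inspecting each factor. By induction on $k$, assume $y_i^k > 0$ for all $i$ (the base case being the hypothesis $y_i^0 = y_i(0) > 0$). Since $\phi_i > 0$ by \eqref{eq:6}, $\varphi_i(\Delta t) > 0$ by \eqref{Parameter1}, and $f_i, g_i \geq 0$ by the representation \eqref{eq:5} together with $A_i, B_i \geq 0$ by hypothesis, every summand in the numerator is nonnegative and the leading term $y_i^k$ is strictly positive; hence the numerator is strictly positive. Likewise the denominator equals $1$ plus nonnegative terms, so it is at least $1 > 0$. Therefore $y_i^{k+1} > 0$, which closes the induction and establishes that $\interior(\mathbb{R}^n_+)$ is positively invariant for every $\Delta t > 0$.

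**The only subtlety**, rather than a genuine obstacle, is making sure the coefficient of $y_i^{k+1}$ is genuinely nonzero so that the division is legitimate and the scheme well defined; but since that coefficient is bounded below by $1$, no smallness or sign condition on $\Delta t$ is needed, which is precisely the unconditional character claimed in the statement. I would also note explicitly that $y_i^k$ appears in a denominator inside the update (through the $B_i/y_i^k$ term), so the inductive hypothesis $y_i^k > 0$ is what guarantees the scheme is well defined at each step — this is the reason the argument must be phrased as an induction rather than a one-shot sign check.
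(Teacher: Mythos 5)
Your proposal is correct and follows essentially the same route as the paper's own proof: both rearrange \eqref{eq:NSFD} into the explicit update \eqref{eq:NSFDa} with numerator $y_i^k + \phi_i f_i + \phi_i\varphi_i A_i$ and denominator $1 + \phi_i g_i + \phi_i\varphi_i B_i/y_i^k$, check signs term by term, and conclude by induction on $k$. Your added remarks---that the denominator is bounded below by $1$ (so the scheme is unconditionally well defined) and that the inductive hypothesis $y_i^k > 0$ is needed for the $B_i/y_i^k$ term to make sense---are accurate refinements of the argument the paper leaves implicit.
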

\begin{proof}
We must prove that $y^k \in \interior(\mathbb{R}^{n}_+)$ for $k > 0$ whenever $y^0 = y(0) \in \interior(\mathbb{R}^{n})$. 
Indeed, we transform \eqref{eq:NSFD} into the explicit form
\begin{equation}\label{eq:NSFDa}
y_i^{k + 1} = \dfrac{y_i^k + \phi_i\,f_i(t^k, y^k) + \phi_i\,\varphi_i\, A_i(t^k, y^k)}{1 + \phi_i\,g_i(t^k, y^k) + \phi_i\,\varphi_i \,B_i(t^k, y^k)/y_i^k} =  \dfrac{(y_i^k)^2 + \phi_i\,y_i^k \,f_i(t^k, y^k) + \phi_i\,\varphi_i \,y_i^k \,A_i(t^k, y^k)}{1 + \phi_i \,y_i^k \,g_i(t^k, y^k) + \phi_i\,\varphi_i\,B_i(t^k, y^k)},
\end{equation}
which implies that $y_i^{k + 1} > 0$ if $y_i^k > 0$. 
Therefore, by mathematical induction, we obtain the desired conclusion. The proof is complete.
\end{proof}

We will now determine the conditions under which the NSFD method \eqref{eq:NSFD} is convergent of order $2$. To this end, let us denote
\begin{equation}\label{eq:9}
v_i(t, y) = \dfrac{\partial F_i}{\partial t}(t, y) + \mathlarger{\mathlarger{\sum}}_{j = 1}^n\dfrac{\partial F_i}{\partial y_j}(t, y)\,F_j(t, y), \quad i = 1, 2, \ldots, n
\end{equation}
\begin{theorem}\label{Theorem2}
Assume that  the following conditions hold for $i = 1, 2, \ldots, n$:
\begin{itemize}
\item $\phi_i(\Delta t, t, y)$ are denominator functions with the property that
\begin{equation}\label{eq:10}
\dfrac{\partial^2 \phi_i}{\partial \Delta t^2}(0,t,y) = 2g_i(t, y)
\end{equation}
for all $(t, y) \in \mathbb{R}_+ \times \interior(\mathbb{R}^{n}_+)$.
\item $\varphi_i(\Delta t)$ satisfy \eqref{Parameter1};
\item $(A_i, B_i)$ is an element of the set $\mathcal{D}(v_i/(2\kappa_i))$.
\end{itemize}
Then,  the NSFD method \eqref{eq:NSFD} satisfies \eqref{eq:4} and its truncated error is $\mathcal{O}(\Delta t^3)$ as $\Delta t \to 0$.
\end{theorem}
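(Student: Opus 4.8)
The plan is to dispatch the two claims separately, since the positivity statement comes essentially for free. Because $(A_i,B_i)\in\mathcal{D}(v_i/(2\kappa_i))$ forces $A_i\ge 0$ and $B_i\ge 0$ by the very definition of $\mathcal{D}$, the hypotheses of Theorem~\ref{Theorem1} hold verbatim, so \eqref{eq:4} is immediate for every $\Delta t>0$. All the real work therefore goes into the local truncation error, which I would obtain by a direct consistency computation on the explicit update \eqref{eq:NSFDa}: substitute the exact solution $y^k=y(t^k)$, expand everything in powers of $\Delta t$ about $\Delta t=0$, and show the residual against $y(t^{k+1})$ is $\mathcal{O}(\Delta t^3)$.

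First I would expand the exact-solution side. Differentiating the equation along the trajectory gives $y_i'(t^k)=F_i(t^k,y^k)=f_i-y_i^k g_i$ and $y_i''(t^k)=\partial_t F_i+\sum_j (\partial_{y_j}F_i)F_j=v_i(t^k,y^k)$, the quantity introduced in \eqref{eq:9}. Taylor's theorem then yields
\[ y_i(t^{k+1}) = y_i^k + \Delta t\,F_i + \tfrac12\Delta t^2\,v_i + \mathcal{O}(\Delta t^3), \]
so the target second-order coefficient on the scheme side is $\tfrac12 v_i$.

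Next I would expand the scheme side. From \eqref{eq:6} one reads off $\phi_i(0)=0$ and $\partial_{\Delta t}\phi_i(0)=1$, while \eqref{eq:10} supplies $\partial^2_{\Delta t}\phi_i(0)=2g_i$, giving $\phi_i=\Delta t+g_i\Delta t^2+\mathcal{O}(\Delta t^3)$. From \eqref{Parameter1}, together with $\varphi_i(0)=0$ (implicit in the statement and needed for first-order consistency, since otherwise the correction term survives already at order $\Delta t$), I get $\varphi_i=\kappa_i\Delta t+\mathcal{O}(\Delta t^2)$ and hence $\phi_i\varphi_i=\kappa_i\Delta t^2+\mathcal{O}(\Delta t^3)$. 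Feeding these into the numerator $N=y_i^k+\phi_i f_i+\phi_i\varphi_i A_i$ and the denominator $D=1+\phi_i g_i+\phi_i\varphi_i B_i/y_i^k$ of \eqref{eq:NSFDa}, and expanding $N/D$ through the geometric series $1/D=1-\Delta t\,g_i-\Delta t^2\kappa_i B_i/y_i^k+\mathcal{O}(\Delta t^3)$, the constant term is $y_i^k$, the $\Delta t^1$ coefficient is $f_i-y_i^k g_i=F_i$, and the $\Delta t^2$ coefficient collects as $g_if_i+\kappa_iA_i$ (from $N$) minus $f_ig_i$ (cross term) minus $\kappa_iB_i$ (from $y_i^k$ times the quadratic part of $1/D$), that is, $\kappa_i(A_i-B_i)$. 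Matching this against $\tfrac12 v_i$ gives $A_i-B_i=v_i/(2\kappa_i)$, which is exactly the hypothesis $(A_i,B_i)\in\mathcal{D}(v_i/(2\kappa_i))$; hence $y_i^{k+1}-y_i(t^{k+1})=\mathcal{O}(\Delta t^3)$, and the consistency-implies-convergence result cited in the preceding remark upgrades this to order-two convergence.

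The hard part is the bookkeeping of the $\Delta t^2$ coefficient in the third step: one must keep $\phi_i$ to second order but $\varphi_i$ only to first, track the mixed product $\phi_i\varphi_i$ (which is purely quadratic and therefore feeds only the second-order term), and verify the key cancellation of the two $f_ig_i$ contributions along with the cancellation of the factor $y_i^k$ against $B_i/y_i^k$. This cancellation is precisely what reduces the order-two requirement to a single scalar identity on the difference $A_i-B_i$, and it is the structural reason the method needs only $A_i-B_i$ to reproduce $v_i=y_i''$ while retaining the freedom in the nonnegative splitting $(A_i,B_i)$ --- the freedom that delivers unconditional positivity without imposing the restrictive condition \eqref{eq:7}.
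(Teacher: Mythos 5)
Your proof is correct and follows essentially the same route as the paper: both expand the explicit update \eqref{eq:NSFDa} in powers of $\Delta t$ about $0$ (the paper via derivatives of $V_i$ at $\Delta t=0$, you via series multiplication with the geometric expansion of $1/D$), match the resulting $\Delta t^2$ coefficient $\kappa_i(A_i-B_i)$ against $\tfrac12 v_i$ from the exact solution's Taylor expansion \eqref{eq:12}, and obtain positivity directly from Theorem~\ref{Theorem1}. Two incidental merits of your write-up: you make explicit the assumption $\varphi_i(0)=0$, which \eqref{Parameter1} omits but the paper uses tacitly when asserting $\dfrac{\partial V_i}{\partial \Delta t}(0,t^k,y_i^k,y^k)=F_i(t^k,y^k)$ in \eqref{eq:11}, and your second-order coefficient $\kappa_i(A_i-B_i)\Delta t^2$ is the correct one, quietly repairing the factor-of-two slip in the last line of \eqref{eq:13} (which should read $\kappa_i(A_i-B_i)\Delta t^2$ rather than $2\kappa_i(A_i-B_i)\Delta t^2$ in order to be consistent with the hypothesis $(A_i,B_i)\in\mathcal{D}\bigl(v_i/(2\kappa_i)\bigr)$).
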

\begin{proof}
First, the positivity of the approximate solutions generated by \eqref{eq:NSFD} is a direct consequence of Theorem~\ref{Theorem1}.
To analyze the truncated error, we rewrite \eqref{eq:NSFDa} in the form
\begin{equation*}
   y_i^{k + 1} := V_i(\Delta t, t^k, y_i^k, y^k) = y_i^k + \dfrac{\phi_i\,F_i(t^k, y^k) + \phi_i\,\varphi_i\,\bigl(A_i(t^k, y^k) - B_i(t^k, y^k)\bigr)}{1 + \phi_i\,g_i(t^k, y^k) + \phi_i\,\varphi_i\,B_i(t^k, y^k)/y_i^k}.
\end{equation*}
By some simple manipulations, we obtain
\begin{equation}\label{eq:11}
\begin{split}
&V_i(0, t^k, y_i^k, y^k) = y_i^k,\\
&\dfrac{\partial V_i}{\partial \Delta t}(0, t^k, y_i^k, y^k) = F_i(t^k, y^k),\\
&\dfrac{\partial^2 V_i}{\partial \Delta t^2}(0, t^k, y_i^k, y^k) = F_i(t^k, y^k)\bigg[\dfrac{\partial^2 \phi_i}{\partial \Delta t^2}(0, t^k, y^k) - 2g_i(t^k, y^k)\bigg] + 2\kappa_i\,\bigl(A_i(t^k, y^k) - B_i(t^k, y^k)\bigr).
\end{split}
\end{equation}
On the other hand, applying Taylor's expansion for $y_i(t)$ yields
\begin{equation}\label{eq:12}
\begin{split}
y_i(t^k + \Delta t) &= y_i(t^k) + y_i'(t^k)\Delta t +  \dfrac{1}{2}y_i''(t^k)\Delta t^2\\
&= y_i(t^k) + F_i\bigl(t^k, y(t^k)\bigr)\Delta t 
  + \dfrac{1}{2}v_i\bigl(t^k,y_i(t^k)\bigr)\Delta t^2 + \mathcal{O}(\Delta t^3),
\end{split}
\end{equation}
where $v_i$ is defined in \eqref{eq:9}.
It follows from \eqref{eq:10} and \eqref{eq:11} that
\begin{equation}\label{eq:13}
\begin{split}
y_i^{k + 1} &= V_i(\Delta t, t^k, y_i^k, y^k)\\
&= V_i(0, t^k, y_i^k, y^k) + \dfrac{\partial V_i}{\partial \Delta t}(0, t^k, y_i^k, y^k)\Delta t + \dfrac{1}{2}\dfrac{\partial^2 V_i}{\partial \Delta t^2}(0, t^k, y_i^k, y^k)\Delta t^2 + \mathcal{O}(\Delta t^3)\\
&= y_i^k + F_i(t^k, y^k) \Delta t + 2\kappa_i\,\bigl(A_i(t^k, y^k) - B_i(t^k, y^k)\bigr)\Delta t^2 + \mathcal{O}(\Delta t^3).
\end{split}
\end{equation}
From \eqref{eq:12}, \eqref{eq:13} and  $(A_i, B_i) \in \mathcal{D}(v_i/(2\kappa_i))$, we obtain
\begin{equation*}
y_i^{k + 1} - y_i(t^{k + 1}) = \mathcal{O}(\Delta t ^3).
\end{equation*} 
This is the desired conclusion. The proof is complete.
\end{proof}

\begin{remark}
Theorem \ref{Theorem2} provides a second-order and positivity-preserving NSFD method for \eqref{eq:1} but it does not require the condition \eqref{eq:7}.
A suitable denominator function that satisfies the condition of Theorem \ref{Theorem2} is
\begin{equation}\label{eq:14}
   \phi_i(\Delta t, t, y) = 
\begin{cases}
&\dfrac{e^{2g_i(t, y)\Delta t} - 1}{2g_i(t, y)} \quad \text{if} \quad g_i(t, y) > 0,\\
&\Delta t \quad \text{if} \quad g_i(t, y) = 0.
\end{cases}
\end{equation}
Since $g_i(t, y) \geq 0$, another denominator function can be
\begin{equation}\label{eq:15}
\phi_i(\Delta t, t, y) = g_i(t, y)\Delta t^2 + \Delta t,
\end{equation}
which is simpler that \eqref{eq:14}.
The functions defined in \eqref{eq:14} and \eqref{eq:15} are not bounded as $\Delta t \to \infty$. A denominator function that is bounded as $t \to \infty$ is given by
\begin{equation}\label{eq:15a}
\begin{split}
&\phi(\Delta t, t, y) = \dfrac{\gamma_1(t, y)\Delta t + \gamma_2(t, y)\Delta t^2}{\gamma_3(t, y) + \gamma_4(t, y)\Delta t^3}, \quad \gamma_i(t, y) > 0,\\
&\gamma_1(t, y) = \gamma_3(t, y), \quad \dfrac{\gamma_2(t, y)}{\gamma_3(t, y)} = g(t, y),  \quad m > 2,
\end{split}
\end{equation}
which is suitable when large step sizes are used to observe the behaviour of the dynamical system over long time periods.
\end{remark}
In general, the values of the denominator functions $\phi_i$ are updated at each iteration step. 
However, if the functions $g_i$ are identical constants, that is $g_i(t, y) = g_i$, then the denominator functions do not require an update at each iteration step. 
Assume that the functions $F_i$ ($i = 1, 2, \ldots, n$) have the property that there exists $\alpha_i > 0$ such that
\begin{equation}\label{eq:P1}
F_i(t, y) + \alpha_iy_i \geq 0 \quad \text{for all} \quad t \geq 0, y \in \interior(\mathbb{R}^n_+).
\end{equation}
Many differential equation models have this property (see \cite{Allen, Horvath1, Horvath2, Smith}). 
Hoang \cite{Hoang2023} constructed a generalized NSFD method preserving the positivity of the solutions and the local dynamics of autonomous dynamical systems with the property \eqref{eq:P1}.

Systems that satisfy \eqref{eq:P1} can written in the form
\begin{equation}\label{eq:P2}
y_i' = f_i(t, y) - y_ig_i(t, y), \quad f_i(t, y) = (F_i(t, y) + \alpha_iy_i\big), \quad g_i(t, y) = \alpha_i.
\end{equation}
Therefore, \eqref{eq:NSFD} provides a second-order positivity-preserving NSFD method for which the denominator functions in the form \eqref{eq:14}-\eqref{eq:15a} do not require updating values at each iteration step.

\section{Numerical Simulation of an SIR Epidemic Model}\label{Sec4}
In this section, we perform numerical experiments to support and illustrate the theoretical results. These experiments consider a mathematical epidemiological model.

We consider a modified Susceptible-Infected-Removed (SIR) model \cite{Bailey} as a test problem, which reads
\begin{equation}\label{eq:Model1}
\begin{split}
y_1'(t) & =-\dfrac{b y_1(t) y_2(t)}{y_1(t)+y_2(t)}, \quad y_1(0) > 0,\\
y_2'(t) & =\dfrac{b y_1(t) y_2(t)}{y_1(t)+y_2(t)}- c y_2(t), \quad y_2(0) > 0,\\
y_3'(t) & = c y_2(t),\quad y_3(0) \geq 0,
\end{split}
\end{equation}
where $b$ and $c$ are positive real numbers; $y_1(t)$, $y_2(t)$ and $y_3(t)$ represent the number of susceptible individuals infected individuals and removed individuals at the time $t$, respectively. We refer the readers to \cite{Bailey} for more details of \eqref{eq:Model1}. 

In a recent work \cite{Lemos-Silva}, Lemos-Silva et al.\ applied the Mickens' methodology \cite{Mickens1, Mickens2, Mickens3, Mickens4, Mickens5} to obtain an NSFD model of the following form:
\begin{equation}\label{eq:Model1a}
\begin{split}
&\dfrac{y_1^{k+1}- y_1^k}{\Delta t}=-\dfrac{b y_1^{k+1} y_2^k}{y_1^k + y_2^k}, \\
&\dfrac{y_2^{k+1}- y_2^k}{\Delta t}=\dfrac{b y_1^{k+1} y_2^k}{y_1^k + y_2^k}-c y_2^{k+1}, \\
&\dfrac{y_3^{k+1}- y_3^k}{\Delta t}=c y_2^{k+1}.
\end{split}
\end{equation}
Notably, the exact solution of \eqref{eq:Model1a} has been explicitly determined  in \cite[Theorem 1]{Lemos-Silva}. 
Previously, Bohner et al.\ \cite{Bohner} had proposed a new method for finding the exact solution of \eqref{eq:Model1}, considering not only constant $b$, $c$ but also variable coefficients $b, c \colon \mathbb{R}_+ \to \mathbb{R}_+$.

In this example, we will consider \eqref{eq:Model1} with variable coefficients. Since the total population $N(t) = y_1(t) + y_2(t) + y_3(t)$ is constant for $t \geq t_0$, it is sufficient to consider the first two equations of \eqref{eq:Model1}:
\begin{equation}\label{eq:Model1b}
\begin{split}
y_1'(t) & = -\dfrac{b(t) y_1(t) y_2(t)}{y_1(t)+y_2(t)}, \quad y_1(0) > 0,\\
y_2'(t) & = \dfrac{b(t) y_1(t) y_2(t)}{y_1(t)+y_2(t)}- c(t) y_2(t), \quad y_2(0) > 0.
\end{split}
\end{equation}
We now apply the NSFD method \eqref{eq:NSFD} to \eqref{eq:Model1b}. First, we decompose the right-hand side function of \eqref{eq:Model1b} as follows:
\begin{equation}\label{eq:Model1c}
\begin{split}
&F_1(t, y) =  -\dfrac{b(t) y_1 y_2}{y_1+y_2}, \quad f_1(t, y) = 0, \quad g_1(t, y) = -\dfrac{b(t)y_2}{y_1+y_2},\\
&F_2(t, y) = \dfrac{b(t) y_1 y_2}{y_1 + y_2}- c(t) y_2, \quad f_2(t, y) = \dfrac{b(t) y_1 y_2}{y_1+y_2}, \quad g_2(y) = -c(t)
\end{split}
\end{equation}
By simple calculations, we obtain
\begin{equation}\label{eq:Model1d}
\begin{split}
&v_1(t, y) = -\dfrac{b'y_1y_2}{y_1 + y_2} + \dfrac{b^2y_1y_2^3}{(y_1 + y_2)^3} - \dfrac{b^2y_1^3y_2}{(y_1 + y_2)^3} + \dfrac{bcy_1^2y_2}{(y_1 + y_2)^2},\\
&2\kappa_1A_1(t, y) = -\dfrac{(b')_-y_1y_2}{y_1 + y_2} + \dfrac{b^2y_1y_2^3}{(y_1 + y_2)^3} + \dfrac{bcy_1^2y_2}{(y_1 + y_2)^2},\\
&2\kappa_1B_1(t, y) = \dfrac{(b')_+y_1y_2}{y_1 + y_2} + \dfrac{b^2y_1^3y_2}{(y_1 + y_2)^3}, \quad \big((b')_+, (b')_-\big) \in \mathcal{D}(b'),\\
&v_2(t, y) = \dfrac{b'y_1y_2}{y_1 + y_2} - c'y_2 - \dfrac{b^2y_1y_2^3}{(y_1 + y_2)^3} + \dfrac{b^2y_1^3y_2}{(y_1 + y_2)^3} - \dfrac{bcy_1^2y_2}{(y_1 + y_2)^2} - \dfrac{bcy_1y_2}{y_1 + y_2} + c^2y_2,\\
&2\kappa_2A_2(t, y) = \dfrac{(b')_+y_1y_2}{y_1 + y_2} - (c')_- y_2 + \dfrac{b^2y_1^3y_2}{(y_1 + y_2)^3} + c^2y_2, \quad \big((c')_+, (c)'_-\big) \in \mathcal{D}(c'),\\
&2\kappa_2B_2 = \dfrac{(b')_-y_1y_2}{y_1 + y_2} - (c')_+y_2 - \dfrac{b^2y_1y_2^3}{(y_1 + y_2)^3} - \dfrac{bcy_1^2y_2}{(y_1 + y_2)^2} - \dfrac{bcy_1y_2}{y_1 + y_2}.
\end{split}
\end{equation}
Once the 4-tuple $(\phi_1, \phi_2, \varphi_1, \varphi_2)$ is chosen, \eqref{eq:Model1c} and \eqref{eq:Model1d} define a second-order positivity-preserving NSFD scheme for \eqref{eq:Model1b}.
 In the numerical examples reported below, we will use some NSFD schemes derived from \eqref{eq:Model1c} and \eqref{eq:Model1d}, that utilize the functions $\phi_i$ and $\varphi_i$ given in Table~\ref{Table1}.
\begin{table}[H]
\centering
\caption{NSFD schemes derived from \eqref{eq:Model1c} and \eqref{eq:Model1d} ($\tau > 0$)}\label{Table1}
\begin{tabular}{ccccccccccccccccccccc}
\hline
NSFD scheme & $\phi_1$ & $\phi_2$ & $\varphi_1$ & $\varphi_2$\\
\hline
2dPNSFD1&$g_1(t, y)\Delta t^2 + \Delta t$&$g_2(t, y)\Delta t^2 + \Delta t$&$\Delta t$&$\Delta t$\\[0.2cm]
2dPNSFD2&$\dfrac{e^{2g_1(t, y)\Delta t} - 1}{2g_1(t, y)}$&$\dfrac{e^{2g_2(t, y)\Delta t} - 1}{2g_2(t, y)}$&$\Delta t$&$\Delta t$\\[0.2cm]
2dPNSFD3&$\dfrac{\Delta t + g_1(t, y)\Delta t^2}{1 + \Delta t^3}$&$\dfrac{\Delta t + g_2(t, y)\Delta t^2}{1 + \Delta t^3}$&$1 - e^{-\tau\Delta t}$&$1 - e^{-\tau\Delta t}$\\
\hline
\end{tabular}
\end{table}
Assume that $b(t)$ and $c(t)$ are bounded for $t \geq 0$, that is, there exist $c^*$ and $b^*$ such that
\begin{equation}\label{eq:B1}
\max_{t \geq 0}b(t) = b^* > 0, \quad \max_{t \geq 0}b(t) = c^* > 0.
\end{equation}
Then, \eqref{eq:Model1b} can be rewritten as
\begin{equation}\label{eq:Model1e}
\begin{split}
y_1' & = \bigg(b^*y_1 - \dfrac{b(t) y_1 y_2}{y_1+y_2}\bigg) - b^*y_1,\\
y_2' & = \bigg(\dfrac{b(t) y_1 y_2}{y_1 + y_2 }- c(t) y_2 + c^*y_2\bigg) - c^*y_2.
\end{split}
\end{equation}
Then, the right-hand side function of \eqref{eq:Model1e} can be decomposed in the form
\begin{equation}\label{eq:Model1f}
\begin{split}
&f_1(t, y) = b^*y_1 - \dfrac{b(t) y_1 y_2}{y_1+y_2}, \quad g_1(t, y) = b^*,\\
&f_2(t, y) = \bigg(\dfrac{b(t) y_1 y_2}{y_1 + y_2 }- c(t) y_2 + c^*y_2\bigg), \quad g_2(t, y) = c^*.
\end{split}
\end{equation}

Therefore, once the $4$-tuple $(\phi_1, \phi_2, \varphi_1, \varphi_2)$ is determined, \eqref{eq:Model1d} and \eqref{eq:Model1f} define a second-order positivity-preserving NSFD scheme for \eqref{eq:Model1b}. We consider the following NSFD schemes derived from \eqref{eq:Model1d} and \eqref{eq:Model1f} with the functions $\phi_i$ and $\varphi_i$ in Table~\ref{Table2}
\begin{table}[H]
\centering
\caption{NSFD schemes derived from \eqref{eq:Model1d} and \eqref{eq:Model1f} ($\tau > 0$)}\label{Table2}
\begin{tabular}{ccccccccccccccccccccc}
\hline
NSFD scheme&$\phi_1$&$\phi_2$&$\varphi_1$&$\varphi_2$\\
\hline
2dPNSFD4&$b^*\Delta t^2 + \Delta t$&$c^*\Delta t^2 + \Delta t$&$\Delta t$&$\Delta t$\\[0.2cm]
2dPNSFD5&$\dfrac{e^{2b^*\Delta t} - 1}{2b^*}$&$\dfrac{e^{2c^*\Delta t} - 1}{2c^*}$&$\Delta t$&$\Delta t$\\[0.2cm]
2dPNSFD6&$\dfrac{e^{2b^*\Delta t} - 1}{2b^*}$&$\dfrac{e^{2c^*\Delta t} - 1}{2c^*}$&$1 - e^{-\tau\Delta t}$&$1 - e^{-\tau\Delta t}$\\
\hline
\end{tabular}
\end{table}

Next, we consider \eqref{eq:Model1b} with (see \cite{Bohner})
\begin{equation*}
b(t) = 1/(1 + t), \quad c(t) = 2/(2 + t), \quad y_1(0) = 0.8, \quad y_2(0) = 0.2.
\end{equation*}
The exact solution is given by \cite{Bohner}:
\begin{equation*}
\begin{split}
&y_1(t) = y_1(0)\dfrac{(y_2(0)/y_1(0)) + 1 + t}{\big[(y_2(0)/y_1(0) + 1\big](t + 1)},\\
&y_2(t) = y_2(0)\dfrac{(y_2(0)/y_1(0)) + 1 + t}{\big[(y_2(0)/y_1(0) + 1\big](t + 1)^2}.
\end{split}
\end{equation*}
Note that $b'(t), c'(t) < 0$ for $t \geq 0$. Hence, we choose $(b')_+ = 0$ and $(c')_+ = 0$ in \eqref{eq:Model1d}. On the other hand, $b(t)$ and $c(t)$ satisfy \eqref{eq:B1} with $b^* = 1$ and $c^* = 2$.

We now compare global errors ($err$) at $T = 1$ and rates of convergence ($ROC$) estimated from the NSFD schemes: 2ndNSFD1, 2ndNSFD2, 2ndNSFD3, 2ndNSFD4, 2ndNSFD5 and 2ndNSFD6 and \eqref{eq:Model1a}.
The results are reported in Tables~\ref{Table3}--\ref{Table6}. 
In these tables, the quantities $err$ and $ROC$ are computed similarly to \cite[Example 4.1]{Ascher}.
\begin{equation*}
\begin{split}
&err(\Delta t) = |y_1^N - y_1(t_N)| + |y_2^N - y_2(t_N)|, \quad t_N = 1, \quad \Delta t = \dfrac{1}{N},\\
&ROC = \log_{\bigg(\dfrac{\Delta t_1}{\Delta t_2}\bigg)}\bigg(\dfrac{err(\Delta t_1)}{err(\Delta t_2)}\bigg).
\end{split}
\end{equation*}
Additionally, the graphs of the errors obtained from the second-order NSFD scheme 2ndNSFD2 and the first-order NSFD scheme \eqref{eq:Model1a} with $\Delta t = 0.01$ over $[0, 1]$ are depicted in Figure \ref{Fig:1}.

The results in Tables~\ref{Table2}--\ref{Table5} show that all second-order NSFD schemes 2ndNSFD1, 2ndNSFD2, 2ndNSFD3, 2ndNSFD4, 2ndNSFD5 and 2ndNSFD6 are convergent of order $2$, whereas \eqref{eq:Model1a} is convergent only order $1$. 
Furthermore, the errors of the second-order NSFD schemes depend on the decomposition of the right-hand side function and the chosen $4$-tuple $(\phi_1, \phi_2, \varphi_1, \varphi_2)$. This leads to the problem of optimizing the errors of the second-order NSFD schemes.

\begin{table}[H]
\centering
\caption{Computed errors and ROC of the 2ndNSFD1 and 2ndNSFD2 schemes}\label{Table3}
\begin{tabular}{cccccccccccccccc}
\hline
$\Delta t$&2ndPNSFD1 err&2ndPNSFD1 rate&2ndPNSFD2 err&2ndPNSFD2 rate\\
\hline
0.5&6.559410475124927e-002&&6.094269133987174e-002&&\\
0.25&1.781929796473945e-002& 1.8801&1.337060657340174e-002&2.1884\\
$10^{-1}$&3.275021538910197e-003&1.8487&2.110852617736816e-003&2.0146\\
$10^{-2}$&3.365172105951331e-005&1.9882&1.971819766188876e-005& 2.0296\\
$10^{-3}$&3.366450123387654e-007&1.9998&1.954409423743364e-007&2.0039 \\
$10^{-4}$&3.366496442724909e-009& 2.0000& 1.952614056555113e-009&2.0004\\
$10^{-5}$&3.368197387665362e-011&1.9998&1.954746087218240e-011&1.9995\\
$10^{-6}$&5.451750162421831e-013&1.7909&3.052558206206868e-013&1.8064\\
\hline
\end{tabular}
\end{table}
\begin{figure}[H]
\centering
\vspace*{-3cm}
\includegraphics[width=0.8\textwidth]{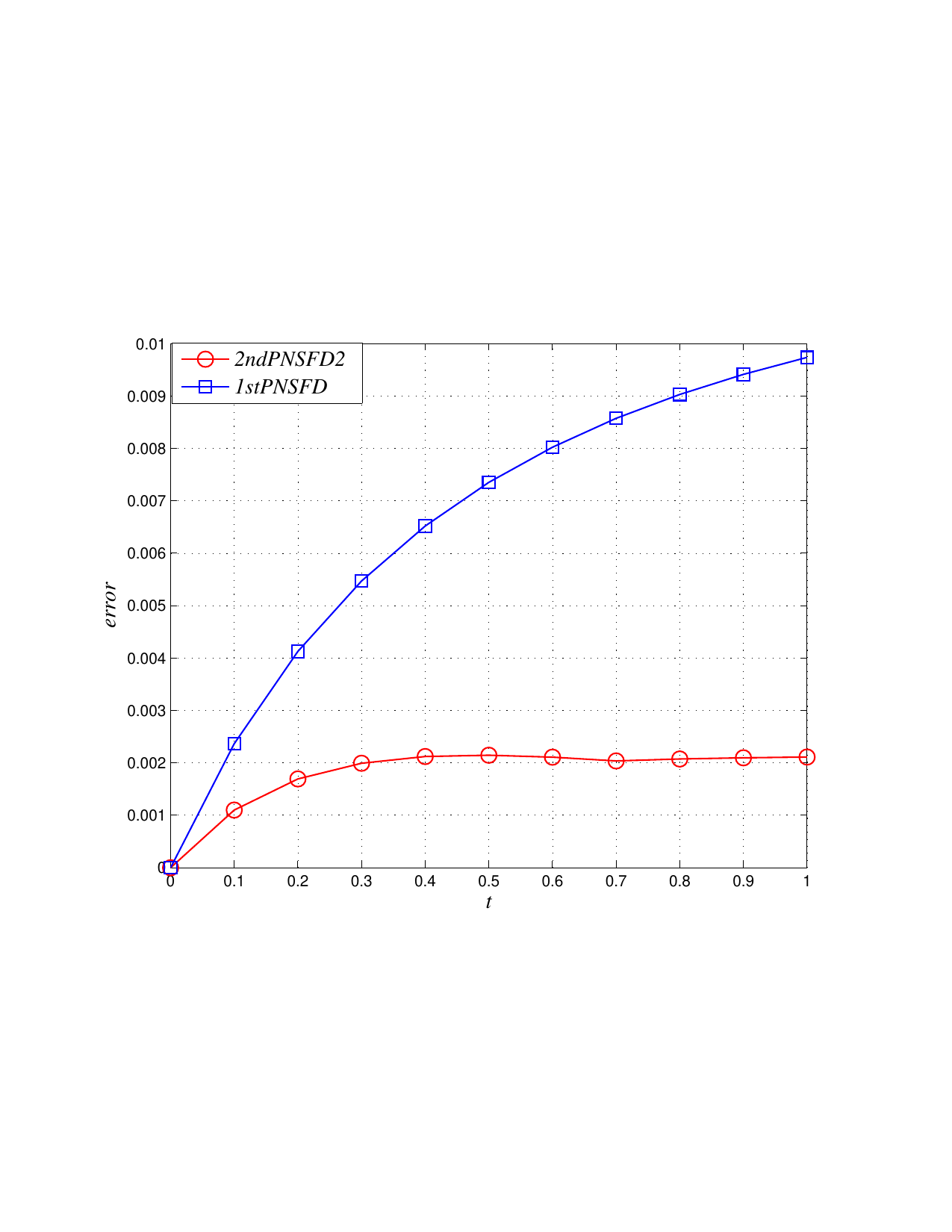}
\vspace*{-4cm}
\caption{Errors obtained from the second-order and first-order NSFD schemes with $\Delta t = 0.01$.}\label{Fig:1}
\end{figure}

\begin{table}[H]
\centering
\caption{Computed errors and ROC of 2ndNSFD3 ($\tau = 5$) and 2ndNSFD4 schemes}\label{Table4}
\begin{tabular}{cccccccccccccccc}
\hline
$\Delta t$&2ndPNSFD3 err&2ndPNSFD3 rate&2ndPNSFD4 err&2ndPNSFD4 rate\\
\hline
0.5&6.944878986451183e-002&&7.902791685990487e-002&\\
0.25&1.644040274307838e-002&2.0787&2.262044634310900e-002&1.8047\\
$10^{-1}$&2.314507864212931e-003&2.1397&3.725237331228468e-003&1.9685\\
$10^{-2}$&2.233206780007102e-005&2.0155&4.118012549277073e-005&1.9565\\
$10^{-3}$&2.220343807701752e-007&2.0025&4.161038818784046e-007&1.9955\\
$10^{-4}$&2.219014763604754e-009&2.0003&4.164809150331017e-009&1.9996\\
$10^{-5}$&2.825792377869618e-011&1.8950&4.781298967859726e-011& 1.9400\\
$10^{-6}$&2.506370111454714e-012&1.0521&8.942110940601822e-012&0.7281\\
\hline
\end{tabular}
\end{table}

\begin{table}[H]
\centering
\caption{Computed errors and ROC of the 2ndNSFD5 and 2ndNSFD6 schemes ($\tau = 5$)}\label{Table5}
\begin{tabular}{cccccccccccccccc}
\hline
$\Delta t$&2ndPNSFD5 err&2ndPNSFD5 rate&2ndPNSFD6 err&2ndPNSFD6 rate\\
\hline
0.5&3.095702263303372e-002&&2.508826597831147e-002&&\\
0.25& 1.271590524117193e-002&1.2836&8.484925721237491e-003&1.5640\\
$10^{-1}$&2.564051674735432e-003&1.7476&1.583635745764422e-003&1.8319\\
$10^{-2}$&2.849100598348309e-005&1.9542 &1.715684442334109e-005&1.9652\\
$10^{-3}$& 2.874830316440535e-007&1.9961&1.728584778509790e-007&1.9967\\
$10^{-4}$& 2.877374796761423e-009&1.9996&1.729869816835539e-009&1.9997\\
$10^{-5}$&2.876071603097330e-011&2.0002& 7.268394219828167e-012& 2.3766\\
$10^{-6}$&5.016959070403004e-013&1.7584&2.456840286768625e-012&0.4711\\
\hline
\end{tabular}
\end{table}
\begin{table}[H]
\centering
\caption{Computed errors and ROC of the first-order NSFD \eqref{eq:Model1a}}\label{Table6}
\begin{tabular}{cccccccccccccccc}
\hline
$\Delta t$&1stNSFD err&1stNSFD rate\\
\hline
0.5&4.272727272727273e-002&& \\
0.25&2.312169312169320e-002&0.8859\\
$10^{-1}$&9.738562091503381e-003&0.9437\\
$10^{-2}$&1.006246214038789e-003&0.9858\\
$10^{-3}$&1.009623162852857e-004&0.9985\\
$10^{-4}$& 1.009962301373735e-005&0.9999\\
$10^{-5}$&1.009996232648192e-006&1.0000\\
$10^{-6}$&1.010000383189214e-007&1.0000\\
\hline
\end{tabular}
\end{table}
Before concluding this section, we will examine the dynamic behavior of the numerical solution generated by the second-order NSFD method using large step sizes. 
To this end, we use the 2ndPNSFD3 scheme to simulate the dynamics of \eqref{eq:Model1b} over $[0, 100]$ and then, compare the numerical solution obtained with those generated by the explicit Euler (first-order) and trapezoidal (second-order) methods (see \cite{Ascher}). The solutions are depicted in Figures \ref{Fig:3} and \ref{Fig:4}. 
Clearly, the 2ndPNSFD3 scheme preserves the dynamical behavior of the continuous model. In contrast, the explicit Euler and trapezoidal schemes produce negative approximations that are negative and differ from the exact solution.
\begin{figure}[htb]
\centering
\vspace*{-3cm}
\includegraphics[width=0.8\textwidth]{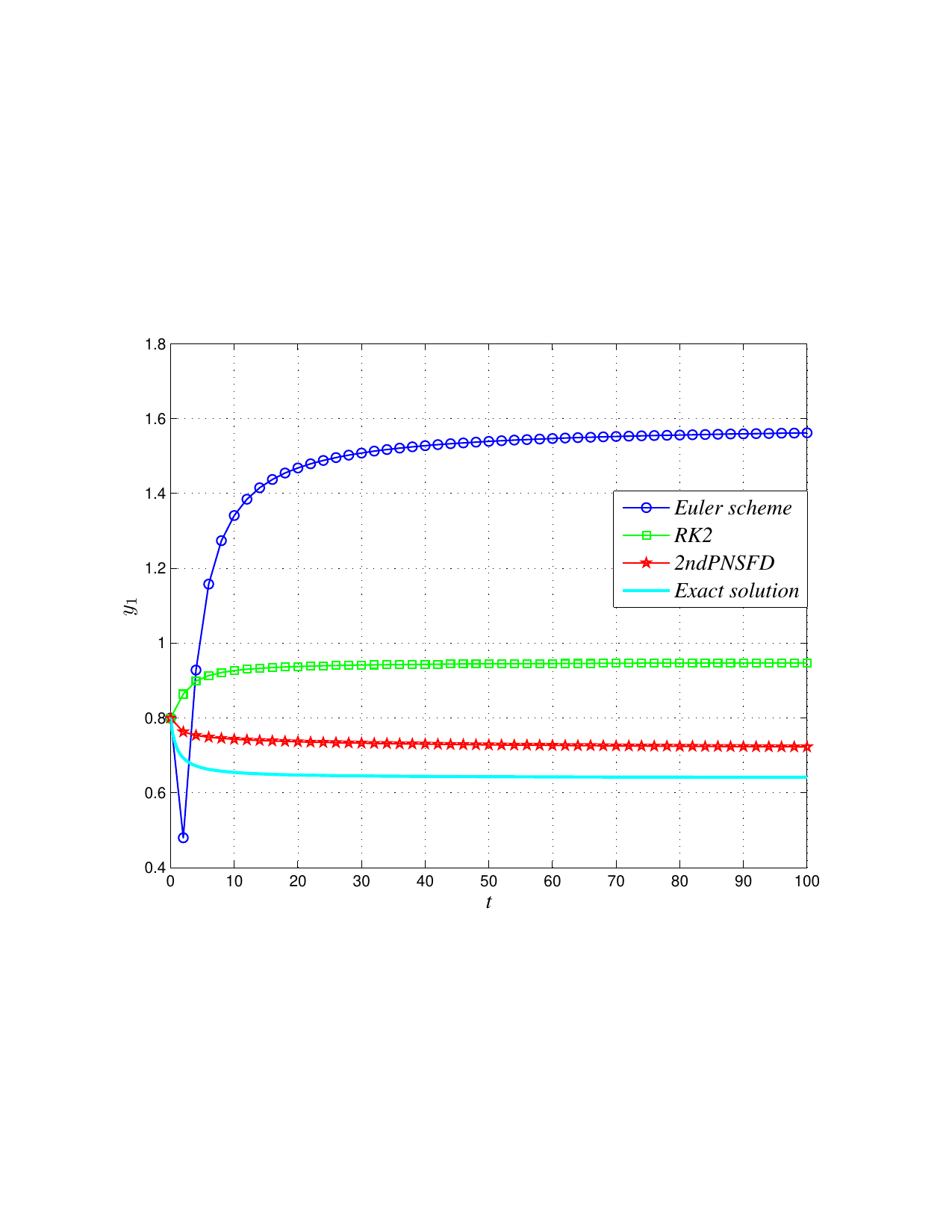}
\vspace*{-4cm}
\caption{Approximate solutions for the $y_1$-components generated by the second-order NSFD scheme and two standard numerical schemes.}\label{Fig:3}
\end{figure}
\begin{figure}[htb]
\centering
\vspace*{-3cm}
\includegraphics[width=0.8\textwidth]{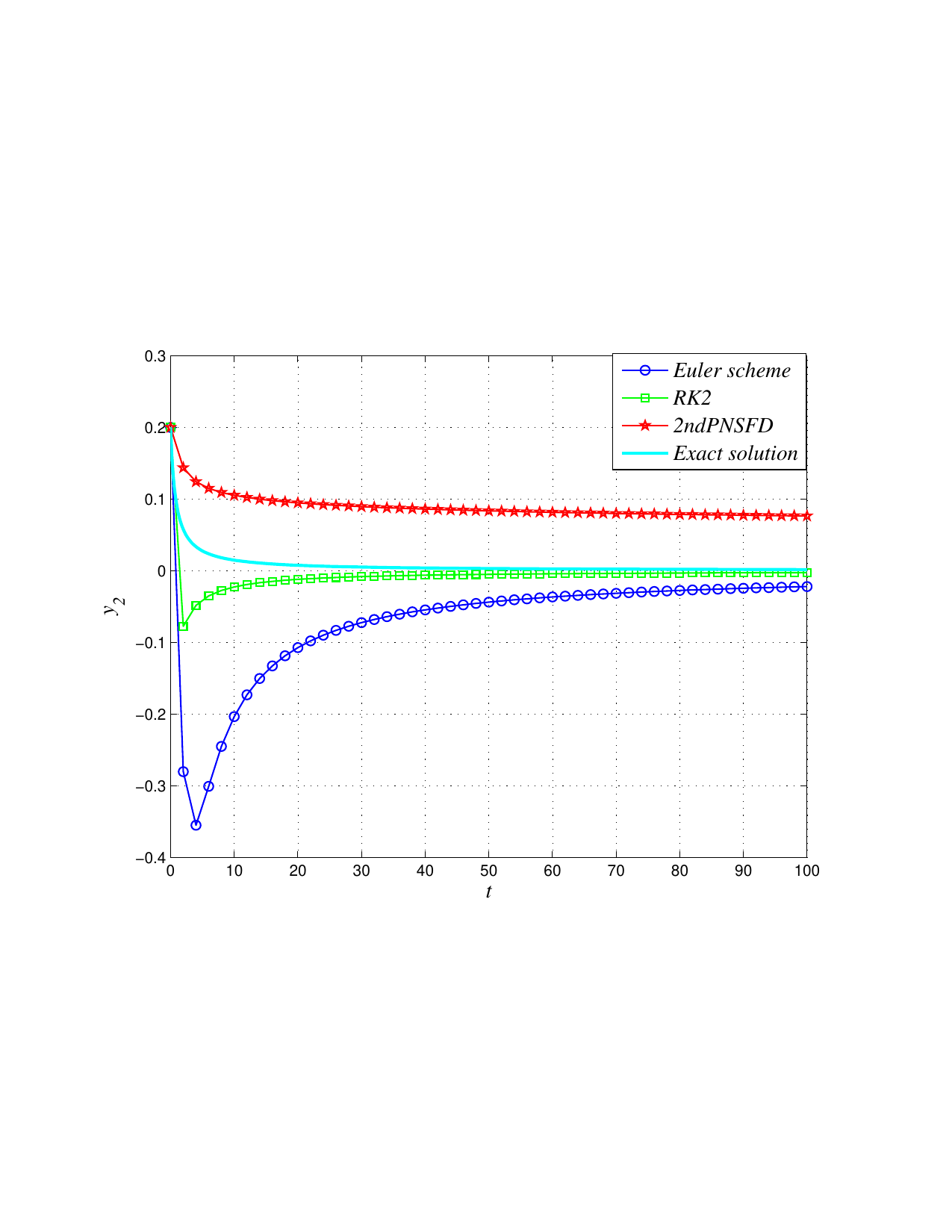}
\vspace*{-4cm}
\caption{Approximate solutions for the $y_2$-components generated by the second-order NSFD scheme and two standard numerical schemes.}\label{Fig:4}
\end{figure}
\begin{remark}
The NSFD schemes \eqref{eq:NSFD1}, \eqref{eq:NSFD2} and \eqref{eq:NSFD3a} are only applicable for the SIR model \eqref{eq:Model1b} when
\begin{equation*}
    \bigl(b(t_k) - c(t_k)\bigr)\,y_1^k \ne c(t_k)\,y_2^k, \quad k \geq 0.
\end{equation*}
\end{remark}

\section{Second-Order Positivity-Preserving NSFD Method Applied to Nonlinear PDEs}\label{Sec5}
In this section, we present an application of the constructed numerical method \eqref{eq:NSFD1} in solving a class of nonlinear PDEs whose solutions are positive.

Many important phenomena and processes arising in mechanics, physics, chemistry, biology, ecology, finance,
environment, etc.\ can be modeled mathematically by nonlinear PDEs (see, e.g., \cite{Allen, Murray1}). 
The solutions of these PDEs often possess essential properties; the most notable of these is the positivity of the solutions. Therefore, constructing numerical methods that preserve the positivity of PDEs is important but not simple in general (see, e.g., \cite{Dias, Mickens1, Mickens2, Mickens3, Mickens4, Mickens5, Mickens6, Mickens7, Patidar1, Patidar2}).

We now consider a class of nonlinear PDEs of the form
\begin{equation}\label{eq:PDE1}
    \dfrac{\partial u(x, t)}{\partial t} + C(u)\,\dfrac{\partial u(x, t)}{\partial x} = D(u)\,\dfrac{\partial^2 u(x, t)}{\partial x^2}+f(u), \quad a \leq x \leq b, \quad 0 \leq t \leq T,
\end{equation}
associated with the boundary conditions
\begin{equation}\label{eq:PDE2}
u(a, t)=a(t), \quad u(b, t)=b(t), \quad 0 \leq t \leq T
\end{equation}
and the initial condition
\begin{equation}\label{eq:PDE3}
u(x, 0) = u_0(x), \quad a \leq x \leq b. 
\end{equation}
In \eqref{eq:PDE1}--\eqref{eq:PDE3}, $C(u)$, $D(u)$, $f(u)$, $a(t)$, $b(t)$ and $u_0(x)$ are functions that satisfy conditions necessary to guarantee unique, positive solutions to the problem \eqref{eq:PDE1}-\eqref{eq:PDE3} on $[a, b] \times [0, T]$. 
The following theorem provides a condition for the solutions of \eqref{eq:PDE1}--\eqref{eq:PDE3} to be positive. 

\begin{theorem}\label{Theorem4}
Assume that $C(u)$, $D(u)$, $f(t, u)$, $a(t)$, $b(t)$ and $u_0(x)$ satisfy conditions that guarantee that the
solutions to the PDE model \eqref{eq:PDE1}-\eqref{eq:PDE3} exist and are unique. Then, $u(t, x) \geq 0$ for $(x, t) \in [a, b] \times [0, T]$ if
\begin{equation*}
\begin{split}
&C(0) \geq 0,\quad D(0) \geq 0,\quad f(0) \geq 0,\\
&a(t) \geq 0, \quad b(t) \geq 0, \quad t \in[0, T],\\
&u_0(x) \geq 0, \quad x \in[a, b].
\end{split}
\end{equation*}
\end{theorem}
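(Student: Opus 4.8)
The plan is to prove non-negativity by a parabolic minimum principle, treating the quasilinear coefficients as frozen along the (assumed to exist and be classical) solution. First I would note that, because the problem is well-posed, the solution $u$ is smooth and the equation is genuinely parabolic along it, i.e.\ $D(u(x,t))\geq 0$ on $[a,b]\times[0,T]$; the stated condition $D(0)\geq 0$ is the value of this coefficient precisely on the face $u=0$ that will control the argument. Since $u$ is continuous on a compact set it attains its minimum, and if that minimum is negative it cannot lie on the parabolic boundary (the line $t=0$ and the sides $x\in\{a,b\}$), because $u_0\geq 0$, $a(t)\geq 0$ and $b(t)\geq 0$ there. Hence a negative minimum would have to be attained at a point $(x_0,t_0)$ with $a<x_0<b$ and $0<t_0\leq T$.

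The heart of the matter is to derive a contradiction at such a point using only the sign data at $u=0$. To do this I would first linearise the reaction term: assuming $f\in C^1$, write
\[
f(u)=f(0)+c(x,t)\,u,\qquad c(x,t):=\int_0^1 f'\bigl(\theta\,u(x,t)\bigr)\,\dd\theta,
\]
so that $c$ is bounded on the compact domain, and rewrite \eqref{eq:PDE1} as the linear parabolic inequality
\[
u_t-\widetilde D(x,t)\,u_{xx}+\widetilde C(x,t)\,u_x-c(x,t)\,u=f(0)\geq 0,
\]
with frozen coefficients $\widetilde D:=D(u)\geq 0$ and $\widetilde C:=C(u)$. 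At an interior spatial minimum one has $u_x=0$ (so the convection term drops out, which is why $C(0)$ will enter the hypotheses only through the limiting evaluation) and $u_{xx}\geq 0$, while $u_t=0$ if $t_0<T$ and $u_t\leq 0$ if $t_0=T$. The naive computation $u_t=\widetilde D\,u_{xx}+c\,u+f(0)$ already shows the flux points inward on the face $u=0$, but the non-strict inequalities leave a gap.

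To close this gap rigorously I would remove the zeroth-order term by the substitution $u=e^{\mu t}z$ with $\mu>\sup c$. Then $z$ satisfies
\[
z_t-\widetilde D\,z_{xx}+\widetilde C\,z_x+(\mu-c)\,z=f(0)\,e^{-\mu t}\geq 0,
\]
with $\mu-c>0$. If $z$ had a negative minimum at an interior or top point $(x_0,t_0)$, then $z_x=0$, $z_{xx}\geq 0$ and $z_t\leq 0$, whence the left-hand side is bounded above by $(\mu-c)\,z<0$, contradicting the non-negativity of the right-hand side. Therefore $z$, and hence $u$, attains its minimum on the parabolic boundary where it is non-negative, which yields $u\geq 0$ throughout $[a,b]\times[0,T]$ and completes the argument.

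The step I expect to be the main obstacle is exactly this localisation to the face $u=0$: the functions $C,D,f$ are prescribed only through their signs at $u=0$, yet a maximum-principle contradiction is naturally evaluated at the (negative) minimum value of $u$. The linearisation of $f$ together with the exponential weighting are precisely what transfer the single sign condition $f(0)\geq 0$ into a source term of definite sign while absorbing the indefinite part $c\,u$; the remaining delicate point is that the quasilinear diffusion must stay non-negative along the solution (parabolicity), which I would justify from the well-posedness hypothesis rather than from $D(0)\geq 0$ alone.
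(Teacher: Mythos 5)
Your proof is correct (up to the regularity you explicitly assume), but it takes a genuinely different route from the paper. The paper never argues at the PDE level: it semi-discretizes \eqref{eq:PDE1} in space by the method of lines, using the one-sided difference $\bigl(u_i - u_{i-1}\bigr)/\Delta x$ for $u_x$ and the centered difference for $u_{xx}$, evaluates the resulting vector field \eqref{eq:PDE5} at $u_i = 0$ to obtain $u_i'\big|_{u_i = 0} = C(0)\,u_{i-1}/\Delta x + D(0)\,(u_{i+1}+u_{i-1})/(\Delta x)^2 + f(0) \geq 0$, invokes the Horv\'ath--Smith quasi-positivity criterion \eqref{eq:2} to conclude that $\mathbb{R}^{M-1}_+$ is positively invariant for the semi-discrete ODE system, and then passes to the limit $\Delta x \to 0$ using the cited MOL convergence theorem of Verwer and Sanz-Serna. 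Your argument is instead the classical parabolic minimum principle with frozen coefficients: linearizing $f(u) = f(0) + c\,u$ with $c$ bounded, weighting by $e^{\mu t}$ with $\mu > \sup c$ to make the zeroth-order term strictly dissipative, and excluding a negative minimum at interior or top points; this is a complete and standard argument. What each approach buys: the paper's proof ties the theorem to the same positivity criterion and the same semi-discrete system \eqref{eq:PDE5} to which the NSFD method is subsequently applied, so positivity of the scheme and of the PDE solution come from a single computation, but it outsources regularity and parabolicity to the MOL convergence citation; your proof is self-contained at the PDE level, requiring only $f$ locally Lipschitz (boundedness of $c$ is all that matters) and $D(u) \geq 0$ along the solution, which, as you correctly flag, must be extracted from the well-posedness hypothesis --- the same place the paper's convergence citation implicitly gets it. A noteworthy by-product of your route is that the convection term vanishes at an interior spatial minimum, so $C(0) \geq 0$ is never used; in the paper that hypothesis is needed precisely because of the upwind choice $\bigl(u_i - u_{i-1}\bigr)/\Delta x$ in the discretization. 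Your version therefore proves a slightly stronger continuous statement, while the paper's version is the one consonant with its numerical agenda.
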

\begin{proof}
To prove the theorem, we first use the \textit{method of lines} (MOL) \cite{Ascher, Verwer} to discretize \eqref{eq:PDE1}-\eqref{eq:PDE3} with respect to the space variable. 
To do so, we fix a regular partition $a = x_0 < x_1 < \ldots < x_M = b$ of $[a, b]$ with a step size $\Delta x = (b - a)/M$ and denote by $u_i(t)$ the approximate the value of $u(x, t)$ at $(x_i, t)$ for $i = 0, 1, \ldots, M$.
In these terms, the partial derivatives with respect to $x$ are approximated by finite difference quotients as follows:
\begin{equation}\label{eq:PDE4}
\begin{split}
& \dfrac{\partial u(x, t)}{\partial x} \approx \dfrac{u_i(t) - u_{i-1}(t)}{\Delta x} \\
& \dfrac{\partial^2 u(x, t)}{\partial x^2} \approx \dfrac{u_{i+1}(t)- 2u_i(t) + u_{i-1}(t)}{(\Delta x)^2}, \quad i = 1, 2, \ldots M-1.
\end{split}
\end{equation}
Consequently, we obtain a system of ODEs for ${u_i(t)}\; (i = 1, 2, \ldots, M-1)$:
\begin{equation}\label{eq:PDE5}
\begin{split}
&u_i^{\prime}(t) = -C\bigl(u_i(t)\bigr) \dfrac{u_i(t)-u_{i-1}(t)}{\Delta x} + D\bigl(u_i(t)\bigr) \dfrac{u_{i+1}(t)-2 u_i(t)+u_{i-1}(t)}{(\Delta x)^2}+ f\bigl(u_i(t)\bigr),\\
&u_i(0)=u_0(x_i).
\end{split}
\end{equation}
Note that $u_0(t) = a(t) \geq 0$ and $u_N(t) = b(t) \geq 0$. 
It follows from \eqref{eq:PDE5} that
\begin{equation*}
\left.u_i^{\prime}\right|_{u_i=0}=C(0) u_{i-1}+D(0) \dfrac{u_{i+1}+u_{i-1}}{(\Delta x)^2}+f(0).
\end{equation*}
Therefore, if $C(0), D(0), f(0) \geq 0$ then $u_i^{\prime}\big|_{u_i=0} \geq 0$ for $u_{i+1}, u_{i-1} \geq 0$. By using \cite[Lemma 2]{Horvath2} and \cite[Proposition B.7]{Smith}, we conclude that the set $\mathbb{R}_{+}^{M-1}$ is a positively invariant set of the system \eqref{eq:PDE5}. 

Conversely, the space discretization \eqref{eq:PDE5} is convergent (see \cite{Verwer}), that is, $u_i(t) \rightarrow u\left(t, x_i\right)$ as $\Delta x \rightarrow 0$. Therefore, we conclude that $u(t, x) \geq 0$ for $t \in[0, T]$ and $x \in[a, b]$. 
The proof is complete.
\end{proof}

By using suitable forms of the 3-tuple $C(u), D(u)$ and $f(u)$, we can obtain a huge variety of highly important PDEs models. 
Below, we mention some mathematical models represented by \eqref{eq:PDE1}.
\begin{itemize}
\item If we take $C = 0, D > 0$ and define $f(u) = u(1 - u)(\alpha -u)$ with $0 \leq \alpha \leq 1$, we obtain the Fitzhugh-Nagumo equation
\begin{equation}\label{eq:PDE6}
\dfrac{\partial u(x, t)}{\partial t} = D(u) \dfrac{\partial^2 u(x, t)}{\partial x^2} + u(1 - u)(\alpha - u),
\end{equation}
which arises in population genetics. More details of this model are provided in \cite{Guckenheimer}.

\item In the case $C = 0, D > 0$, and the function $f$ is given by $f(u) = \alpha u + \beta u^m$ with $\alpha, \beta, m \ne 1$, we obtain the Kolmogorov-Petrovskii-Piskunov (KPP) equation:
\begin{equation}\label{eq:PDE7}
\dfrac{\partial u(x, t)}{\partial t} + C(u) \dfrac{\partial u(x, t)}{\partial x}= D(u) \dfrac{\partial^2 u(x, t)}{\partial x^2} + \alpha u + \beta u^m,
\end{equation}
which arises in heat and mass transfer, combustion theory, biology, and ecology. 
More details about the equation can be found in \cite{McKean1}.
An explicit positivity-preserving finite-difference scheme for \eqref{eq:PDE7} was constructed in \cite{Dias}.
\item If $C = 0, D > 0$ and $f(u) = u(1 - u^{\tau})$ with $\tau > 1$, then \eqref{eq:PDE1} generates the Fisher-Kolmogorov equation with applications in biology, see \cite{Murray1}
\begin{equation}\label{eq:PDE8}
\dfrac{\partial u(x, t)}{\partial t}= D(u) \dfrac{\partial^2 u(x, t)}{\partial x^2}+u\left(1-u^\tau\right).
\end{equation}
In the case $C, D > 0$ and $f(u) = \lambda_1 u - \lambda_2 u^2$, where $\lambda_1$ and $\lambda_2$ are both positive, \eqref{eq:PDE1} becomes the Fisher PDE
\begin{equation}\label{eq:PDE9}
\dfrac{\partial u(x, t)}{\partial t} = D(u) \dfrac{\partial^2 u(x, t)}{\partial x^2} + \lambda_1 u - \lambda_2 u^2.
\end{equation}
This equation was considered in \cite{Mickens6}, in which a positivity-preserving NSFD scheme was constructed.

\item Especially, if $D > 0$, $C(u) = u$ and $f(u) = u(1 - u)$, we obtain from \eqref{eq:PDE1} the Fisher
PDE having nonlinear diffusion:
\begin{equation}\label{eq:PDE10}
\dfrac{\partial u(x, t)}{\partial t}+ u \dfrac{\partial u(x, t)}{\partial x}  = D \dfrac{\partial^2 u(x, t)}{\partial x^2} + u(1-u).
\end{equation}
This equation was considered in \cite{Mickens7}, in which a positivity-preserving NSFD scheme was formulated.
\end{itemize}

In this section, we consider \eqref{eq:PDE1}--\eqref{eq:PDE3} with strictly positive solutions:
\begin{equation*}
     u(x, t) > 0 \quad \text{for all} \quad (x, t) \in [a, b] \times [0, T].
\end{equation*}

In order to obtain a positivity-preserving numerical scheme, we apply the second-order NSFD method \eqref{eq:NSFD} for \eqref{eq:PDE5}. For this purpose, we rewrite the system \eqref{eq:PDE5} in the form
\begin{equation}\label{eq:PDE11}
\begin{split}
u_i'(t) &= -\big[C_+(u_i) - C_-(u_i)\big]\dfrac{u_i(t)-u_{i-1}(t)}{\Delta x}\\
&+ \big[D_+(u_i) - D_-(u_i)\big] \dfrac{u_{i+1}(t)-2 u_i(t)+u_{i-1}(t)}{(\Delta x)^2} + f_i(u_i) - u_ig_i(u_i),\\
&(C_+, C_-) \in \mathcal{D}(C), \quad (D_+, D_-) \in \mathcal{D}(D), \quad (f_i, u_ig_i) \in \mathcal{D}(f(u_i)),\\
u_i(0) &= u_0\left(x_i\right),
\end{split}
\end{equation}
or equivalently,
\begin{equation}\label{eq:PDE12}
u_i' := \mathcal{F}_i(u) = F_i(u) - u_iG_i(u),
\end{equation}
where $u = [u_1\,\,\,u_2\,\,\,\ldots\,\,\,u_{M-1}]^\top$ and
\begin{equation*}\label{eq:PDE13}
\begin{split}
&F_i(u) = C_-(u_i)\dfrac{u_{i}}{\Delta x} + C_+(u_i)\dfrac{u_{i-1}}{\Delta x} + D_+(u_i)\dfrac{u_{i+1} + u_{i-1}}{(\Delta x)^2} + D_-(u_i)\dfrac{2u_i}{\Delta x^2} +  f_i(u_i),\\
&G_i(u) = \dfrac{C_+(u_i)}{\Delta x}  + \dfrac{C_-(u_i)}{u_i}\dfrac{u_{i-1}}{\Delta x} + D_-(u_i)\dfrac{u_{i+1} + u_{i-1}}{u_i \Delta x^2} + g_i(u_i).
\end{split}
\end{equation*}
We then obtain a second-order, positivity-preserving numerical scheme for the original PDE model \eqref{eq:PDE1}--\eqref{eq:PDE3}by applying the NSFD method \eqref{eq:NSFD} to \eqref{eq:PDE12}.
\begin{remark}
The NSFD schemes constructed in \cite{Mickens6} for  the Fisher PDE \eqref{eq:PDE9} and in \cite{Dias} for the KPP 
model \eqref{eq:PDE7} are essentially applications of the NSFD methodology to the resulting ODE systems obtained by applying the MOL to the PDE models. Therefore, they only achieve first-order convergence with respect to $\Delta t$.
The second-order positivity-preserving NSFD method \eqref{eq:NSFD} is generally applicable not only for \eqref{eq:PDE12} but also to ODE systems obtained by applying the MOL to PDEs. Thus, it is useful for solving several PDE models with positive solutions.
\end{remark}

\section{Second-Order Positivity-Preserving NSFD Method Applied to BVPs}\label{Sec6}
This section introduces the application of the constructed NSFD method to solving nonlinear BVPs whose solutions are positive.

It is well-known that both linear and nonlinear shooting methods for BVPs lead to solving systems of ODEs \cite{Ascher, Burden}. 
Therefore, the NSFD method \eqref{eq:NSFD} can be used to solve the resulting ODE systems. 
To illustrate this observation, we consider a class of nonlinear BVPs of the form:
\begin{equation}\label{eq:BVP}
u''(t) + \lambda f(u(t)) = 0, \quad 0 \leq t \leq L, \quad u(0) = u(L) = 0,
\end{equation}
which models certain physical problems \cite{Laetsch}, where
\begin{itemize}
\item $f(w) > 0$ for $w > 0$;
\item $\lambda > 0$ is a physical parameter
\end{itemize}
Laetsch \cite{Laetsch} investigated the values of $\lambda$ for which the BVP \eqref{eq:BVP} admits positive solutions, as well as how the behavior of these solutions changes with respect to $\lambda$. 
One of the main results addresses the case in which $f$ is a convex function of $w$ satisfying $f(w) > 0$ for $w > 0$. We refer the reader to \cite{Laetsch} for a more detailed discussion of these findings.

A particularly important special case of the BVP~\eqref{eq:BVP} is the Bratu equation \cite{Bratu}:
\begin{equation*}\label{eq:BVP1}
   u''(t) = -\lambda e^{u(t)}, \quad  0 \leq t \leq 1,
\end{equation*}
subject to the boundary condition
\begin{equation*}
    u(0) = u(1) = 0.
\end{equation*}
This equation has many important theoretical and practical applications, and it is widely used as a benchmark to verify the reliability and efficiency of various approximation methods (see, e.g., \cite{Karamollahi, Tomar} and references therein).

We now assume that the solutions to \eqref{eq:BVP} exist. 
To apply the constructed second-order NSFD method \eqref{eq:NSFD}, we first use the solutions' symmetry to transform the problem of solving \eqref{eq:BVP} into the problem of solving a sequence of ODEs with positive solutions. 
Any solution of \eqref{eq:BVP} is symmetric about the point $t = L/2$, that is $u(t) = u(L/2 - t)$ for $0 \leq t \leq L$ \cite{Laetsch}; hence, we only need to consider \eqref{eq:BVP} on the interval $[0, L/2]$.
 On this interval, it is easy to verify that
\begin{itemize}
\item $u(t) > 0$ for $0 < t \leq L/2$;
\item $u'(t) > 0$ for $0 < t < L/2$;
\item $u'(L/2) = 0$ and therefore, $\max_{0 \leq t \leq L}u(t) = u(L/2)$.
\end{itemize}
As a result, we transform \eqref{eq:BVP} to the following system of ODEs:
\begin{equation}\label{eq:BVP2}
\begin{split}
&u' = v, \quad u(0) = 0,\\
&v' = -\lambda f(u), \quad v(0) = s > 0,
\end{split}
\end{equation}
where the first slope $s$ is determined such that $v'(L/2) = 0$. 
\eqref{eq:BVP2} can be rewritten in the form \eqref{eq:5} as follows:
\begin{equation}\label{eq:BVP3}
\begin{split}
&y_1' = f_1(y_1, y_2) - y_1g_1(y_1, y_2),\\
&y_2' = f_2(y_1, y_2) - y_2g_2(y_1, y_2),
\end{split}
\end{equation}
where
\begin{equation*}\label{eq:BVP4}
\begin{split}
&y_1 = u, \quad y_2 = v,\\
&f_1(y_1, y_2) = y_2, \quad g_1(y_1, y_2) = 0,\\
&f_2(y_1, y_2) = 0, \quad g_2(y_1, y_2) = -\lambda\dfrac{f(y_1)}{y_2}.
\end{split}
\end{equation*}

We obtain positive approximate solutions with second-order accuracy by applying the NSFD method \eqref{eq:NSFD} to \eqref{eq:BVP3}.
The solution of the BVP is obtained by solving a sequence of initial value problems, for which the initial slope is determined via the equation $y_2(s, L/2) = 0$.

\section{Concluding Remarks and Discussions}\label{Sec7}
In this work, we we have proposed a simple and efficient approach for constructing a generalized, second-order, positivity-preserving numerical method for non-autonomous dynamical systems. 
This method is based on a new non-local approximation of the right-hand side function combined with the normalization of denominator functions. 
Notably, the constructed method does not require the strict and indispensable conditions imposed by some well-known second-order positivity-preserving NSFD methods.  
Therefore, a computational implementation is straightforward.

Important applications of the constructed NSFD method are also provided, and numerical experiments are carried out to support and illustrate the theoretical results. As a result, the NSFD scheme for the SIR epidemic model constructed in \cite{Lemos-Silva} has been improved. 
Additionally, applications of the constructed second-order positivity-preserving NSFD method to solving classes of PDEs and BVPs that arise in real-world situations have been introduced and analyzed.

In the near future, we will expand upon the present approach and the results obtained in this work to study the construction of higher-order, dynamically consistent numerical methods for differential equation models with complex dynamics. Additionally, the practical applications of the proposed methods will be of particular interest.

\section*{Acknowledgments}
The first author, Manh Tuan Hoang, wishes to thank the Vietnam Institute for Advanced Study in Mathematics (VIASM) for its financial support and the excellent working conditions. This work was completed while the author was working at the VIASM.

\section*{CRediT authorship contribution statement}
\noindent
\textbf{Manh Tuan Hoang:} Writing - review \& editing, Writing - original draft, Visualization, Validation, Supervision, Software, Resources, Project administration, Methodology, Investigation, Formal analysis, Data curation, Conceptualization, Funding acquisition.

\smallskip\noindent
\textbf{Mathias Ehrhardt:} Writing - review \& editing, Writing - original draft, Visualization, Validation, Supervision, Software, Resources, Project administration, Methodology, Investigation, Formal analysis, Data curation, Conceptualization, Funding acquisition.

\bibliographystyle{amsalpha}

\begin{thebibliography}{999}
%
\bibitem{Alalhareth1}
F. K. Alalhareth, M. Gupta, S. Roy, H. V. Kojouharov, Second-order modified positive and elementary stable nonstandard numerical methods for $n$-dimensional autonomous differential equations, Mathematical Methods in the Applied Sciences 48 (2025) 8037-8057.

\bibitem{Alalhareth2}
F. K. Alalhareth, Higher-order nonstandard finite difference methods for
autonomous differential equations with applications in
mathematical ecology, PhD thesis, The University of Texas at Arlington, 2022.

\bibitem{Alalhareth3}
F. K. Alalhareth, A. C. Mendez, H. V. Kojouharov, A simple model of nutrient recycling and dormancy in a chemostat: Mathematical analysis and a second-order nonstandard finite difference method, Communications in Nonlinear Science and Numerical Simulation 132 (2024) 107940.

%
\bibitem{Allen}
L. J. S. Allen, An Introduction to Mathematical Biology, Pearson, 2006.

\bibitem{Ascher}
U.M. Ascher, L.R. Petzold, Computer Methods for Ordinary Differential Equations and Differential-Algebraic Equations, Society for Industrial and Applied Mathematics, Philadelphia, 1998.

\bibitem{Bailey}
N. T. J. Bailey, The Mathematical Theory of Infectious Diseases and Its Applications, Hafner Press [Macmillan Publishing Co., Inc.], New York, 1975.
%
\bibitem{Bohner}
M. Bohner, S. Streipert, D. F. M. Torres, Exact solution to a dynamic SIR model, Nonlinear Analysis: Hybrid Systems 23(2019) 228-238.

\bibitem{Bratu}
G. Bratu, Sur les \'equations integrales non-lin\'eaires, Bulletin de la Soci\'et\'e Math\'ematique de France 42 (1914) 113-142.

\bibitem{Burden}
R. L. Burden, J. D. Faires, Numerical Analysis, ninth edition, Cengage Learning, 2010.

\bibitem{Conte}
D. Conte, G. Pagano, T. Rold\'an, 
High order nonstandard finite-difference methods, 
Applied Mathematics and Computation 510 (2026) 129681.

\bibitem{Cresson1}
J. Cresson, F. Pierret, Non standard finite difference scheme preserving dynamical properties, Journal of Computational and Applied Mathematics 303 (2016) 15-30.

\bibitem{Cresson2}
J. Cresson, A. Szafra\'nska, Discrete and continuous fractional persistence problems -- the positivity property and applications,  Communications in Nonlinear Science and Numerical Simulation 44 (2017) 424-448.


\bibitem{Dias}
J. E. Mac\'ias-D\'iaz, A. Puri, 
An explicit positivity-preserving finite-difference scheme for the classical Fisher-Kolmogorov-Petrovsky-Piscounov equation, 
Applied Mathematics and Computation 218 (2012) 5829-5837.

\bibitem{Guckenheimer}
J. Guckenheimer, C. Kuehn, Homoclinic orbits of the Fitzhugh-Nagumo equation: The singular limit,
Discrete and continuous dynamical systems Series S 2 (2009) 851-872.

\bibitem{Hirsch}
M. W. Hirsch, S, Smale, R. L. Devaney, Differential Equations, Dynamical Systems, and an Introduction to Chaos, Academic Press, 2013.

\bibitem{Hoang1}
M. T. Hoang, A novel second-order nonstandard finite difference method for solving one-dimensional autonomous dynamical systems,
Communications in Nonlinear Science and Numerical Simulation 114 (2022) 106654.
%
\bibitem{Hoang2}
M. T. Hoang, A novel second-order nonstandard finite difference method preserving dynamical properties of a general single-species model, International Journal of Computer Mathematics 100 (2023) 2047-2062.
%
\bibitem{Hoang3}
M. T. Hoang, High-order nonstandard finite difference methods preserving dynamical properties of one-dimensional dynamical systems, Numerical Algorithms 98 (2025) 219-249.
%
\bibitem{HoangMatthias1}
M. T. Hoang, M. Ehrhardt, A general class of second-order $L$-stable explicit numerical methods for stiff problems, 
Applied Mathematics Letters 149 (2024) 108897.
%
\bibitem{HoangMatthias2}
M. T. Hoang, M. Ehrhardt, A second-order nonstandard finite difference method for a general Rosenzweig-MacArthur predator-prey model, Journal of Computational and Applied Mathematics 444 (2024) 115752.
%
\bibitem{Hoang2023}
M. T. Hoang, A generalized nonstandard finite difference method for a class of autonomous dynamical systems and its applications, Mathematical and Computational Modeling of Phenomena Arising in Population Biology and Nonlinear Oscillations, Contemporary Mathematics 793 (2024) 17-44.
%
\bibitem{Horvath1}
Z. Horv\'ath,  Positivity of Runge-Kutta and diagonally split Runge-Kutta methods, Applied Numerical Mathematics 28(1998) 309-326.

\bibitem{Horvath2}
Z. Horv\'ath,  On the positivity step size threshold of Runge-Kutta methods, Applied Numerical Mathematics 53 (2005) 341-356.

\bibitem{Karamollahi}
N. Karamollahi, G. B. Loghmani, M. Heydari, A computational method to find dual solutions of the one-dimensional Bratu problem, 
Journal of Computational and Applied Mathematics 388 (2021) 113309.

\bibitem{Kojouharov1}
H. V. Kojouharov, S. Roy, M. Gupta, F. Alalhareth, J. M. Slezak,
A second-order modified nonstandard theta method for one-dimensional autonomous differential equations, Applied Mathematics Letters 112 (2021) 106775.



\bibitem{Laetsch}
T. Laetsch, The number of solutions of a nonlinear two point boundary value problem, Indiana University Mathematics Journal 20 (1970)  1-13.

\bibitem{Lemos-Silva}
M. Lemos-Silva, S. Vaz, D. F. M. Torres, Exact solution for a discrete-time SIR model, Applied Numerical Mathematics 207 (2025) 339-347.
%
\bibitem{Mattheij}
R. Mattheij, J. Molenaar, Classics in Applied Mathematics: Ordinary Differential Equations in Theory and Practice, 
Society for Industrial and Applied Mathematics, New York, 2002.

%

\bibitem{McKean1}
H. P. McKean, Application of Brownian motion to the equation of Kolmogorov-Petrovskii-Piskunov, 
Communications on Pure and Applied Mathematics 28 (1975) 323-331.

\bibitem{Mickens1}
R. E. Mickens,
Nonstandard Finite Difference Models of Differential Equations, 
World Scientific, Singapore, 1994.

\bibitem{Mickens2}
R. E. Mickens,
Applications of Nonstandard Finite Difference Schemes,  World Scientific, Singapore, 2000.

\bibitem{Mickens3}
R. E. Mickens, Dynamic consistency: a fundamental principle for constructing nonstandard finite difference schemes for differential equations,  Journal of Difference Equations and Applications 11 (2005) 645-653.

\bibitem{Mickens4}
R. E. Mickens,  Advances in the Applications of Nonstandard Finite Difference Schemes, World Scientific, Singapore, 2005.

\bibitem{Mickens5}
R. E. Mickens, Nonstandard Finite Difference Schemes: Methodology and Applications, World Scientific, 2020.

\bibitem{Mickens6}
R. E. Mickens, Determination of denominator functions for a NSFD scheme for the Fisher PDE with linear advection, 
Mathematics and Computers in Simulation 74 (2007) 190-195.

\bibitem{Mickens7}
R. E. Mickens, 
A Nonstandard Finite Difference Scheme for a Fisher PDE Having Nonlinear Diffusion, 
Computers and Mathematics with Applications 45 (2003) 429-436.

\bibitem{Murray1}
J. G. Murray, Mathematical Biology, I: An Introduction, third ed., Springer, New York, 2002.



\bibitem{Patidar1}
K. C. Patidar, On the use of nonstandard finite difference methods, Journal of Difference Equations and Applications 11(8) (2005) 735-758.

\bibitem{Patidar2}
K. C. Patidar, Nonstandard finite difference methods: recent trends and further developments, Journal of Difference Equations and Applications 22(6) (2016) 817-849


\bibitem{Smith}
H. L. Smith,  P. Waltman, The Theory of the Chemostat: Dynamics of Microbial Competition, Cambridge University Press, 1995.


%
\bibitem{Tomar}
S. Tomar, R. K. Pandey, An efficient iterative method for solving Bratu-type equations, Journal of Computational and Applied Mathematics 357 (2019) 71-84.
\bibitem{Verwer}
J. G. Verwer, J. M. Sanz-Serna, Convergence of method of lines approximations to partial differential
equations, Computing 33 (1984) 297-313.
\bibitem{Wood}
D. T. Wood, H. V. Kojouharov, A class of nonstandard numerical methods for autonomous dynamical systems, 
Applied Mathematics Letters 50 (2015) 78-82.

\bibitem{Wood1}
D. T. Wood, D. T. Dimitrov, H. V. Kojouharov, A nonstandard finite difference method for $n$-dimensional productive-destructive systems, Journal of Difference Equations and Applications  21 (2015) 240-254.


\end{thebibliography}

\end{document}